\newtheorem{theorem}{Theorem}[subsection]
\newtheorem{definition}[theorem]{Definition}
\newtheorem{proposition}[theorem]{Proposition}
\newtheorem{lemma}[theorem]{Lemma}
\numberwithin{equation}{section}
\newcommand{\q}[1]{``#1''}
\title{Are all Weakly Convex and Decomposable 
Polyhedral Surfaces Infinitesimally Rigid?}
\author{Jilly Kevo}
\date{April 2024}
\begin{document}

\maketitle

\renewcommand{\d}[1]{\ensuremath{\operatorname{d}\!{#1}}} 

\begin{abstract}
    It is conjectured that all decomposable 
    (that is, interior can be triangulated without adding new vertices) polyhedra  with vertices in convex position are infinitesimally rigid and
only  recently  has it  been shown that this is indeed true under an additional assumption of codecomposability (that is,
the interior of the difference between the convex hull and the polyhedron itself can be triangulated without adding new vertices). 

 One major set of tools for studying  infinitesimal rigidity happens to be  the (negative) Hessian $M_T$ of the discrete Hilbert-Einstein functional.
Besides its theoretical importance, it provides  the necessary machinery to tackle the problem 
experimentally. 
To search for potential counterexamples to the conjecture, one constructs an explicit family of so-called $T$-polyhedra, all of which are weakly convex and decomposable, while being non-codecomposable.
Since infinitesimal rigidity is equivalent to a non-degenerate $M_T$, one can let  \textit{Mathematica} search for the eigenvalues of $M_T$ and 
 gather experimental evidence that such a flexible, weakly convex and decomposable $T$-polyhedron may not exist.
\end{abstract}

\tableofcontents

\section{Introduction} 

\subsection{Structure of the paper}
Already back in 1766, Leonhard Euler
conjectured that \q{a closed spatial figure
allows no changes, as long as it is not 
ripped apart}. This lead to a lot
of partial results, 
 among them the well-known theorem by Cauchy assuring us that  convex \q{closed spatial figures} are indeed all rigid, in a 
 sense that will be made precise
 soon.
However, the reason 
for why nobody
could prove Euler's conjecture
in its entirety turns out to be that it is simply \textit{wrong}, as Connelly (1977)
\cite{key1}
 demonstrated by providing a counterexample and gifting the world its first flexible and non-self-intersecting polyhedral
surface in $\mathbb{R}^3$. 

Since not all closed surfaces share rigidity, the natural question would be
to seek minimal conditions under which 
the latter holds. This lead to the following 
conjecture, lying at the heart of this
paper: \\ \\
\noindent\fbox{%
    \parbox{\textwidth}{
$\textbf{Main conjecture}$: \\ 
Every weakly convex and decomposable polyhedron is infinitesimally rigid.
}} \\ \\
Let us remark that the motivation for
studying the conjecture stems from 
Izmestiev and Schlenker (2010)
\cite{key8}. There it was 
shown that the statement 
is indeed true under the additional
assumption of 
codecomposability and it was left as
an open problem to
determine whether this
supplementary requirement  
is truly
necessary. The conjecture was however
also mentioned in Connelly and Schlenker (2010) \cite{origin} as Question 1.1. and even before that, in a paper by  
 Schlenker (2005) \cite{origin2} where it also
 originates from. The latter proves that the
 conjecture holds true for all polyhedra $P$ for which there
 exists an ellipsoid containing no vertices of
 $P$ but intersecting all its edges. 
 Before continuing, we'll first need
 some definitions.

\begin{definition}
Let $S$ be a  
triangulation of a compact, orientable
surface with $V$
denoting the
set of vertices and $E$ the set of edges.
A \textbf{polyhedral
surface} or 
\textbf{polyhedron} is the image of a
 map $S \longrightarrow \mathbb{R}^3$
that is affine 
on each edge and  non-degenerate
on the faces.
\end{definition}

\begin{definition}[Izmestiev, 2011, \cite{key6}]\label{system}
    Let $P \subset \mathbb{R}^3$ be a polyhedron 
    with vertices 
    $V = \{p_1, ..., p_n\}$. An \textbf{infinitesimal
  isometric  deformation} 
  of $P$ is a map $q \: : \: 
  V \to \mathbb{R}^3$ such that 
  \begin{equation}\label{eq_1}
     \frac{d}{dt} 
  \biggr|_{t = 0}
  \textnormal{dist}(p_i + tq_i, p_j + tq_j) = 0, 
  \end{equation}
  for all edges $p_ip_j$ of 
  $P$ and where $q(p_i) =: q_i$.
  
\end{definition}

\begin{definition}\label{rigid}
A polyhedron $P \subset \mathbb{R}^3$ is said to be 
\textbf{infinitesimally rigid} if every infinitesimal isometric deformation is trivial 
in first order, that is
$$q_i = K(p_i),$$
for $K$  a Killing field of $\mathbb{R}^3$.
If there is a non-trivial infinitesimal
isometric deformation, the 
polyhedron is said to be 
\textbf{infinitesimally flexible}.
\end{definition}
So, an infinitesimal deformation is just an assignment of vectors to each vertex of a polyhedron $P$. 
If moving the vertices in the assigned directions induces a zero first-order variation 
 of the edge lengths,  we speak of isometric infinitesimal deformation.
Such a deformation is trivial if the Euclidean distance between every pair of points is preserved, that is, the motion is just a  rigid motion in $\mathbb{R}^3$.
Explicitly, one can verify that Equation
\eqref{eq_1} is equivalent to 
$$\langle p_i - p_j, q_i - q_j \rangle
= 0,$$
for all edges $p_ip_j$ of 
  $P$.

\begin{definition}
    A polyhedron $P \subset \mathbb{R}^3$ is said to be \textbf{weakly convex} if every vertex $v$ of
    $P$ has a supporting plane
    that intersects $P$ at exactly $v$.
\end{definition}
For instance, consider a cube having an
additional vertex in the center of one 
of its faces. This polyhedron is convex
but
 not weakly convex. Such a vertex 
 is called \textbf{flat vertex}.
That 
weak convexity is  indeed a necessary condition for infinitesimal rigidity will be illustrated in Section \ref{section3.3} by an example. 

It should also be pointed out that weak convexity does not imply convexity. Indeed, 
the Schönhardt polyhedron, as shown in Figure
\ref{schon}, already provides us with a
counterexample.

\begin{definition}
A \textbf{triangulation} of a 
polyhedron $P$ is a partition of its 
interior into tetrahedra. 
Such a polyhedron is said to be \textbf{decomposable} if its interior can be
triangulated without adding new vertices and 
 \textbf{codecomposable} if the interior of its complement, that is, the difference between the convex hull of $P$ and $P$ itself, can be triangulated without adding new vertices.
\end{definition}
Let us remark that it is 
still an open problem to determine
sufficient criteria a
polyhedron has to satisfy
in order to be 
decomposable. Note that in the discrete
geometry literature, one rather encounters the word 
 \textbf{tetrahedralizable} instead of
 decomposable. 

The structure of the paper is now as follows.
In order to explore the main conjecture,
we would like to 
explicitly test a certain family of weakly convex,
decomposable and non-codecomposable polyhedra 
for infinitesimal rigidity. This is 
done with the aid of the Schönhardt 
polyhedron, which is known to be 
infinitesimally flexible as shown by
Izmestiev (2011) \cite{key6} for instance
and the simplest non-decomposable 
polyhedron as demonstrated by Schönhardt (1928) \cite{key11}. Albeit
not being able to provide a 
counterexample to the main conjecture, 
we'll suggest a recipe that 
enables one to decide 
with a computer whether a given 
polyhedron is infinitesimally rigid or not.
This is done throughout Sections 
\ref{section2} and \ref{section3} with explicit computations given
in the appendix.\\ \\
In Section \ref{section3}, an 
elementary proof of the fact 
that the $\pi/ 6$-twisted
Schönhardt polyhedron 
  is infinitesimally 
flexible is given. 
This is done by rederiving Wunderlich's (1965)
\cite{key13}
formula (Equation \eqref{wunderlich})
by purely geometric means.
Lastly, we provide an 
example to illustrate why weak convexity is a 
necessary condition for infinitesimal
rigidity and conclude with an outlook
summarizing all of the
experimental observations we collected so
far.
This leads to a new conjecture that all
polyhedra belonging 
to a certain family 
and 
satisfying the assumptions 
of the main conjecture must be infinitesimally rigid. Moreover, we collect 
experimental evidence that
for any infinitesimally 
flexible polyhedron 
in that family weak convexity
and 
decomposability can not be 
achieved simultaneously.

\subsection{Regge Calculus and the discretization of space}
Before moving to
the main part of the paper, we'll take
a small detour to motivate the
techniques that are used 
to study the infinitesimal rigidity of
polyhedra. 

In his  paper \q{General relativity without coordinates} (1961) \cite{key10}, Tullio Regge developed a way to 
discretize 
$N$-dimensional Riemannian manifolds 
using 
 a collection of $N$-dimensional building blocks whose intrinsic geometry (their 
 metric)
is Euclidean (that is, flat).
This is known as \textit{Regge Calculus}. Besides the
original paper, see for example 
chapter $42$ of Misner et al. (1973) \cite{key9}. 

Apart from being interesting
for gravitational physics by 
providing the necessary tools to
evaluate the curvature of 
Lorentzian manifolds in an 
intrinsic and manageable way, it  also
constitutes one of the cornerstones
of the mathematical 
formulation  of infinitesimal 
rigidity, considering that the
 $N$-dimensional building blocks mentioned above are Euclidean simplices $S_N$, as 
 depicted in Figure \ref{figS}. \\
\begin{figure}[H]
\centering
\includegraphics[width=5.5cm,height=5.5cm]{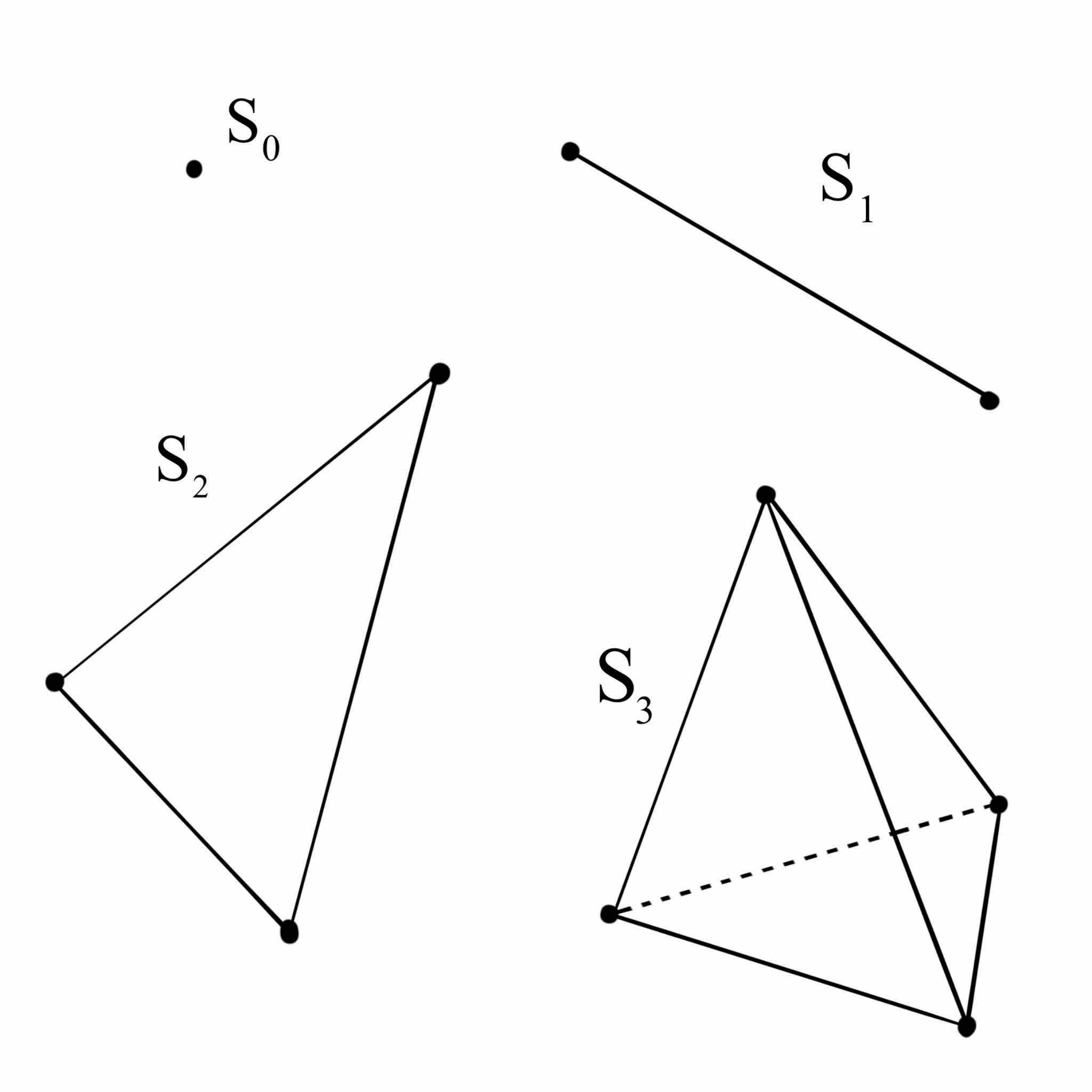}
\caption{A few Euclidean simplices}
\label{figS}
\end{figure}
 Joined facet to facet,  the initially smooth manifold transforms into a 
discrete association of simplices.
What is particularly well-suited 
for computations is that the
characterization of any such discrete manifold skeleton only
requires the specification of the edge lengths of the simplices and the gluing rules for connecting them. 
By choosing a collection of sufficiently small simplices, any  
smooth  manifold can be approximated to arbitrarily  high precision with  an assembly of that kind.
The fact 
 that edge lengths suffice to specify the intrinsic geometry, has been 
 exploited so far to 
 conclude that
\begin{center}
    \textbf{curvature lies concentrated into simplices of dimension $N-2$}.
\end{center}
These simplices are  the so-called \textbf{bones} (or, in the language
of discrete geometry, the $1$-skeleton) 
of the structure and 
turn out to be 
of uttermost value in our context.
Given a weakly convex
polyhedron $P \subset \mathbb{R}^3$  and a triangulation of its interior into $S_3$ simplices such that every vertex of every interior tetrahedron coincides with a vertex of the polyhedron, 
 we will restrain ourselves from moving the vertices of $P$ (and therefore altering its boundary lengths)
and modify the metric inside
of it. Before making all of this precise in the next subsection, 
this is roughly speaking
 done by varying the 
lengths of the 
bones (which are 
just $S_1$ 
simplices) of $P$, that is
 the collection of all interior edges
 of the triangulation. The 
 edge lengths of
 the polyhedron itself are not altered,
 only the ones of the individual tetrahedra.
 As a result, this
   induces a  total angle of
  $\omega_i$ around each such interior 
  edge that potentially differs now from $2\pi$. This is done
  by summing up the individual 
  angles of the now modified 
  tetrahedra that constituted
  the interior triangulation while
  respecting the initial 
  gluing.
 Defining the total curvature of  an interior edge $i$ as $\kappa_i = 2 \pi - \omega_i$, Regge Calculus 
 enables us to express it in terms 
 of edge lengths. 
In order to extend this to infinitesimal rigidity, it is necessary to borrow another function from physics.

\subsection{The discrete Hilbert-Einstein functional
}
Following Izmestiev (2014)
\cite{key7}, we briefly recall the 
most important
 results 
 surrounding
the discrete Hilbert-Einstein functional.

\begin{definition}
    Let $P \subset \mathbb{R}^3$ be a 
    polyhedron and $T$ a triangulation of 
    $P$ with interior edges 
    $e_1, ..., e_n$. Then 
    $\mathcal{D_{P,T}}$ is defined 
    as the collection of $n$-tuples of the
    form $ l := (l_1, ..., l_n) \in \mathbb{R}^n_{>0}$ which, for every
    simplex $\sigma$ of $T$, are such that 
    replacing the lengths of the
    edges of $\sigma$ which are interior
    edges of the triangulation by 
    the corresponding $l_j$'s in $l$
    induces a non-degenerate simplex.
\end{definition}

\begin{definition}
Let $e'_1, ..., e_r'$ denote the boundary 
edges of $P$,  $l_j'$ the length of 
the edge $e'_j$ and  $\alpha_j$ 
the dihedral angle of $P$ at $e'_j$, for
$j \in \{1, ...,r\}$. Moreover, for 
$i \in \{1, ..., n\}$, let 
$\omega_i$ be the total angle 
around the interior edge $e_i$ and 
$\kappa_i := 2 \pi - \omega_i$ the singular
curvature along it. \\
In that case, the \textbf{discrete Hilbert-Einstein
    functional} is defined by
     \begin{align*}
\textnormal{HE} \: : \: \mathcal{D_{P,T}}
&\longrightarrow \mathbb{R} \\
l &\longmapsto        
\sum_{i=1}^{n} l_i \kappa_i + \sum_{j=1}^{r} l_j' (\pi-\alpha_j).
\end{align*}
\end{definition}
Note that this functional can be viewed as the discrete  analog of twice the total scalar curvature of $P$ plus half of the total mean curvature of $\partial P$,
hence its name.
In combination with the $3$-dimensional Euclidean Schläfli formula (which is valid for each individual simplex),
$$ \sum_{e} l_e \d \alpha_e = 0,$$
where the sum runs over all edges of $P$, $l_e$ denoting the length and $ \alpha_e$ the dihedral angle at each edge,
the first-order
variation of the Hilbert-Einstein functional reduces to 
$$   \textnormal{dHE} = \sum_{i=1}^{n} \kappa_i  \d l_i.$$
Note that this 
expression takes in tangent 
vectors to $\mathcal{D_{P,T}}$ 
as input, so that it expresses the 
first-order variation of the interior edge lengths of a triangulation $T$  of a polyhedron $P$ with $n$ interior edges.
Most importantly, the Hessian  $( \pdv{ \textnormal{HE}}{l_i}{l_j } ) $ of 
HE is equal to the Jacobian of the map
$(l_i)_{i=1}^{n} \to (\kappa_i)_{i=1}^{n}$.
Since differentiation
eradicates the constant of $2 \pi$ in the curvature term, one has
$$ M_T := \left( \pdv{\omega_i}{l_j} \right) = - \left( \pdv{ \textnormal{HE}}{l_i}{l_j } \right).$$
Observe moreover that $M_T$ must be symmetric, given that
it equals minus the Hessian of HE. 
This matrix plays an important role in the theory,
as illustrated by the following two results. 
Given that symmetric matrices are
especially well 
suited
for computations, it also provides us with the necessary tools to tackle the problem experimentally. 
\begin{theorem}\label{Th_1}
Let $P$ be a convex polyhedron and $T$ a triangulation admitting $m$ interior and $k$ flat vertices. The dimension of  $\textnormal{ker}(M_T)$ is $3m+k$ and $M_T$ has $m$ negative eigenvalues.
\end{theorem}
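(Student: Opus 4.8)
The plan is to read off the two spectral quantities from three separate lower bounds that happen to add up to the full size of the matrix. Recall that $M_T=(\partial\omega_i/\partial l_j)$ is a symmetric $n\times n$ matrix (it is minus the Hessian of $\mathrm{HE}$), so it has a well-defined signature, and that it decomposes as a sum $M_T=\sum_\sigma M_\sigma$ of contributions of the individual tetrahedra $\sigma$ of $T$, where $M_\sigma$ records how the dihedral angles of $\sigma$ at its interior edges react to changes of its edge lengths. Signatures do not add under summation, so a single-simplex computation cannot give the global answer directly. Instead I would prove independently that (i) $\dim\ker M_T\ge 3m+k$, (ii) $M_T$ is negative definite on a subspace of dimension $\ge m$, and (iii) $M_T$ is positive definite on a subspace of dimension $\ge n-4m-k$. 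Since these three numbers sum to exactly $n$, all the inequalities are forced to be equalities, which yields $\dim\ker M_T=3m+k$ and exactly $m$ negative eigenvalues simultaneously.

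For the kernel bound (i) I would exhibit $3m+k$ explicit null vectors coming from rigid deformations of the realized polyhedron. The key observation is that \emph{any} first-order motion of the vertices of $T$ inside $\mathbb{R}^3$ that stays embedded automatically keeps every total angle $\omega_i$ equal to $2\pi$, because the tetrahedra continue to fill a full neighborhood of each interior edge; hence such a motion lies in $\ker M_T$ as soon as it preserves the fixed boundary edge lengths to first order. Moving an interior vertex in any of the three coordinate directions affects no boundary edge and so contributes $3m$ independent vectors, while displacing a flat vertex orthogonally to the flat region of $\partial P$ in which it sits changes the incident boundary edge lengths only to second order, contributing $k$ further vectors. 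These $3m+k$ vectors are independent because the genuine vertices of $P$ are pinned.

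For (ii) I would single out, at each interior vertex $v$, the \emph{radial} vector $r_v$ that lengthens all interior edges incident to $v$ at unit rate. Geometrically $r_v$ cannot be produced by moving $v$ in $\mathbb{R}^3$, so it is transverse to the kernel directions above, and $\langle M_T r_v,r_v\rangle$ equals the rate of change of the sum of the total angles around the edges at $v$ under uniform radial stretching. A local computation on the spherical link of $v$ (reducing the derivatives of the dihedral angles to spherical trigonometry of the link) should show this quantity is negative, i.e. radial stretching concentrates positive singular curvature $\kappa_i$ at $v$. Since distinct $r_v$ are supported on essentially disjoint edge sets, I expect the form to be negative definite on their $m$-dimensional span. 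The genuinely hard step is (iii): producing a positive-definite subspace of dimension $n-4m-k$. This is exactly where global convexity of $P$ must enter, as opposed to the purely local link geometry used for (ii); indeed for non-convex polyhedra such as Schönhardt's the positivity must fail, which is precisely the mechanism behind infinitesimal flexibility. I would attempt to write $M_T$ as a discrete Laplacian-type matrix, whose off-diagonal entries carry a sign controlled by the spherical links and which is manifestly positive semidefinite, plus a rank-$m$ negative correction concentrated in the radial directions; the positivity of the Laplacian part together with the transversality of the $r_v$ would then pin the signature.

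The main obstacle, then, is the positivity statement (iii): verifying the sign of the off-diagonal entries from the spherical geometry of the links and checking that the radial correction accounts for \emph{all} of the negativity. Should the explicit sign analysis prove unwieldy, a safer fallback is a continuity argument: the nullity of $M_T$ is constantly $3m+k$ over the connected space of convex realizations of a fixed triangulation, so the number of negative eigenvalues is locally constant there, and it suffices to evaluate it on one convenient model. A single tetrahedron subdivided by coning an interior point gives a $4\times 4$ matrix of signature $(1,3,0)$ in the ordering (negative, null, positive), matching $(m,3m+k,n-4m-k)=(1,3,0)$; an induction that adds interior vertices one at a time by stellar subdivision, each step adding exactly one negative eigenvalue and three null directions, would then propagate the value $m$ to the general case.
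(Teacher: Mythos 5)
The paper itself offers no proof to compare against: its ``proof'' of Theorem \ref{Th_1} is the bare citation \cite{key7}, so your argument must be judged on its own merits, and there it has a genuine gap. The skeleton is sound: since $(3m+k)+m+(n-4m-k)=n$, your three lower bounds (i)--(iii) would indeed force equality everywhere. Step (i) is correct and completable --- moving vertices in $\mathbb{R}^3$ keeps the configuration Euclidean, hence every $\omega_i \equiv 2\pi$; interior-vertex motions touch only interior edge lengths, normal displacements of flat vertices change boundary lengths only to second order, and independence follows because the edges at an interior vertex span $\mathbb{R}^3$. Step (ii), however, is already an expectation rather than a proof: you never carry out the link computation giving $\langle M_T r_v, r_v\rangle<0$, and negative definiteness on the span of the $r_v$ does not follow from ``essentially disjoint supports'' --- two interior vertices can be joined by an interior edge, and even for disjoint stars the cross terms $\langle M_T r_u, r_v\rangle$ involve entries $\partial\omega_i/\partial l_j$ with $e_i$, $e_j$ in different stars, which have no reason to vanish.

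The fatal gap is (iii), as you half-acknowledge. The proposed ``positive semidefinite Laplacian plus rank-$m$ correction'' is not available a priori: the off-diagonal entries of $M_T$ carry no uniform sign, and any such sign structure would itself have to be extracted from convexity --- the one hypothesis your outline never actually deploys beyond the remark that it must enter somewhere. The fallback does not repair this, because it is circular: for a continuous symmetric family the nullity is only upper semicontinuous, so your bound (i) does not prevent the kernel from jumping above $3m+k$ at special points, and exactly there eigenvalues may cross zero; to conclude that the negative index is locally constant you need the upper bound $\dim\ker M_T \le 3m+k$, which is the substance of the theorem. The stellar-subdivision induction also cannot reach an arbitrary $T$: not every triangulation of a convex polytope arises by starring interior points into a smaller one, the connectedness of your space of convex realizations with fixed combinatorics is unproven, and any change of triangulation changes $n$, the size of $M_T$, so the spectrum cannot be ``tracked'' through such changes without a separate flip lemma. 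Finally, note that the missing kernel bound is closely tied (cf.\ the mechanism of Lemma \ref{L_1}) to the infinitesimal rigidity of convex polyhedra itself, which is precisely why the cited proof in \cite{key7} requires substantial machinery at this point; as written, your outline assumes the theorem at its crux.
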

 \begin{proof}
Izmestiev and Schlenker (2010) \cite{key8}.
\end{proof}
So, if the convex polyhedron in question can be triangulated  without interior 
and flat vertices, $M_T$ is positive definite. 
Another result that will be of great use to
us is the following:
\begin{lemma}\label{L_1}
Let $P$ be a polyhedron admitting a triangulation $T$ without interior vertices. Then $P$ is infinitesimally rigid if and only if $M_T$ is non-degenerate. 
\end{lemma}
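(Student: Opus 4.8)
The plan is to establish a linear isomorphism between the space of infinitesimal isometric deformations of $P$ modulo the trivial ones and the kernel of $M_T$; the lemma then follows at once, since by Definition \ref{rigid} the polyhedron is infinitesimally rigid precisely when the former space is zero. The hypothesis that $T$ has no interior vertices enters essentially: it guarantees that every vertex of every tetrahedron of $T$ is a vertex of $P$, so that every edge of $T$ is either a boundary edge or one of the interior edges $e_1, \dots, e_n$, with no stray degrees of freedom from unconstrained interior vertices.

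First I would construct the forward map. Given an infinitesimal isometric deformation $q$, each edge $e$ of $T$ acquires a first-order length variation $dl_e = \langle p_i - p_j,\, q_i - q_j\rangle / l_e$; by the isometric condition this vanishes on every boundary edge, leaving the interior data $dl = (dl_1, \dots, dl_n)$. Since the configurations $p_i + t q_i$ remain non-degenerate for small $t$, the tetrahedra of $T$ continue to fill a neighborhood of each interior edge $e_i$, so the total angle $\omega_i$ stays equal to $2\pi$ and $d\omega_i = 0$. Expanding $d\omega_i = \sum_j (\partial\omega_i/\partial l_j)\, dl_j = (M_T\, dl)_i$ shows $dl \in \ker M_T$. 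The assignment $q \mapsto dl$ has kernel exactly the trivial deformations: if $dl = 0$, then, together with the vanishing on boundary edges, all six edge-length variations of each tetrahedron vanish, so $q$ restricts to an element of $\mathfrak{se}(3)$ on each tetrahedron; two tetrahedra sharing a triangular face agree on three affinely independent vertices, and an infinitesimal rigid motion is determined by its values there, so the restrictions coincide, and connectedness of the dual graph of $T$ forces a single global Killing field.

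The substantial work is the reverse direction: realizing a given $dl \in \ker M_T$ by an actual deformation $q$. Here I would develop the velocity field across the triangulation. Fix one tetrahedron and choose velocities for its four vertices inducing the prescribed edge-length variations (possible up to an infinitesimal rigid motion, which I fix as a gauge), then propagate across shared faces. Crossing a face, the velocity $q_4$ of the opposite vertex is the unique solution of the three linear equations $\langle p_4 - p_k,\, q_4 - q_k\rangle = l_k\, dl_k$ ($k=1,2,3$), whose coefficient matrix has rows $p_4 - p_k$ and is invertible precisely because the tetrahedron is non-degenerate. The only obstruction to the result being well-defined is its holonomy around cycles of the dual graph. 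The first-order holonomy around the link of an interior edge $e_i$ is governed by $d\omega_i = (M_T\, dl)_i$, which vanishes since $dl \in \ker M_T$; and because $P$ is a topological ball, every cycle of the dual graph is generated by these edge-links, so all holonomies vanish and $q$ is globally single-valued. This $q$ preserves all boundary lengths and induces $dl$, hence is an infinitesimal isometric deformation, and it is non-trivial whenever $dl \ne 0$.

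The step I expect to be the crux is the identification of the first-order holonomy around an interior edge with $d\omega_i$, together with the reduction of global consistency to these elementary loops via the simple-connectivity of $P$; this is exactly where the Regge-calculus reading of $M_T$ as the Jacobian of $l \mapsto \omega$ does the real work. The two maps above are mutually inverse, yielding the isomorphism between $\{\text{isometric deformations}\}/\{\text{trivial}\}$ and $\ker M_T$, and therefore the equivalence of infinitesimal rigidity with non-degeneracy of $M_T$.
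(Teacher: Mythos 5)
Your proposal is correct, but there is no in-paper argument to compare it against: the paper's entire proof of Lemma \ref{L_1} is a bare citation of Bobenko and Izmestiev \cite{key0a}. What you have written is, in substance, a faithful reconstruction of the standard proof from that source: an isomorphism between infinitesimal isometric deformations modulo trivial ones and $\ker M_T$, with the forward map given by first-order interior edge-length variations (using that $\omega_i\equiv 2\pi$ for genuine vertex configurations in $\mathbb{R}^3$, so $d\omega_i=0$, and that boundary variations vanish --- note that because $P$ has triangular faces and $T$ adds no vertices, the boundary edges of $T$ are exactly the edges of $P$, so the isometric condition constrains precisely them), and the reverse map by developing a velocity field across the dual graph. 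Your use of the no-interior-vertices hypothesis is exactly right; it is what kills the $3m$-dimensional kernel contribution visible in Theorem \ref{Th_1}. Two steps are stated rather than proved, and both are genuine (if standard) work: first, the identification of the holonomy around the link of $e_i$ with $d\omega_i$ --- one checks that the discrepancy is a Killing field vanishing at both endpoints of $e_i$ (their velocities are carried around unchanged), hence an infinitesimal rotation about the line through $e_i$, and then computes its rate as the sum of the first-order dihedral-angle variations over the star, i.e.\ $(M_T\,dl)_i$; second, the claim that edge-link loops generate all dual-graph cycles, which requires the usual transversality argument (a loop in the dual graph bounds a disk in the open ball, which after generic perturbation meets the $1$-skeleton in finitely many interior points, splitting the loop into conjugates of edge-links). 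Modulo writing out those two computations, your proof is complete and is essentially the argument the paper outsources to \cite{key0a}.
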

\begin{proof}
Bobenko and Izmestiev (2008) \cite{key0a}. 
\end{proof}

While Izmestiev and Schlenker proved Theorem \ref{Th_1} in \cite{key8}, they obtained the following result as a consequence:
\begin{theorem}\label{Th_2}
If a polyhedron is weakly convex, decomposable, and weakly codecomposable with triangular faces then it must be infinitesimally rigid.
\end{theorem}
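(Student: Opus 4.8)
The plan is to deduce the infinitesimal rigidity of $P$ from the positive semidefiniteness of the Hessian attached to the \emph{convex hull} $Q := \mathrm{conv}(P)$, to which Theorem \ref{Th_1} applies directly. First I would record that weak convexity forces every vertex of $P$ to be a vertex of $Q$, so $P$ and $Q$ share the same vertex set. Using decomposability, triangulate the interior of $P$ without new vertices by some $T_1$; using weak codecomposability, triangulate the complementary region $C := Q \setminus P$ without adding interior vertices, possibly adjoining vertices only in the relative interior of the facets of $Q$. Since the faces of $P$ are triangular, $\partial P$ is already a simplicial surface on which both triangulations restrict identically (no new boundary vertices being introduced on $\partial P$), so $T_1$ and the triangulation of $C$ glue along $\partial P$ into a genuine triangulation $T$ of the convex polyhedron $Q$. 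The only vertices of $T$ that are not vertices of $P$ lie on $\partial Q$; these are exactly the flat vertices of $T$, and $T$ has no interior vertices. Applying Theorem \ref{Th_1} with $m = 0$ and $k$ equal to the number of added boundary vertices then shows $M_T$ is positive semidefinite with $\dim \ker M_T = k$.

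The second step is to identify the Hessian $M_{T_1}$ that governs the rigidity of $P$ via Lemma \ref{L_1} with a principal submatrix of $M_T$. Index the interior edges of $T$ as $I_P \sqcup R$, where $I_P$ consists of the edges strictly interior to $P$ (the interior edges of $T_1$) and $R$ collects the remaining interior edges of $T$, namely the edges of $\partial P$ lying strictly inside $Q$ together with the edges interior to $C$. For an edge $e_i \in I_P$ the total angle $\omega_i$ is a sum of dihedral angles of tetrahedra of $T_1$ only, since every tetrahedron incident to $e_i$ lies inside $P$; moreover $\partial\omega_i/\partial l_j$ for $j \in I_P$ is computed with all $R$-lengths held fixed, exactly as in the definition of $M_{T_1}$. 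Hence the $I_P \times I_P$ block of $M_T$ is precisely $M_{T_1}$, and since a principal submatrix of a positive semidefinite matrix is positive semidefinite, $M_{T_1} \succeq 0$. It remains to upgrade this to non-degeneracy.

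This is where the main work lies. I would argue that the $k$-dimensional kernel of $M_T$ never meets the $I_P$-block. Each flat vertex $v$ lies in the relative interior of a facet of $Q$ and, crucially, is \emph{not} a vertex of $P$; hence every edge incident to $v$ joins $v$ either to a vertex of $P$ or to another added boundary vertex, so it lies in $R$ or on $\partial Q$ (and is therefore not a free variable). The kernel directions of $M_T$ contributed by the flat vertices are supported on edges incident to those vertices, hence on $R$, while no edge of $I_P$ is incident to any $v$ because edges in $I_P$ join two vertices of $P$. Consequently $\ker M_T \subseteq \mathrm{span}(R)$. If $M_{T_1}$ were degenerate, a nonzero $x$ with $M_{T_1}x = 0$ would satisfy $(x,0)^{T} M_T (x,0) = x^{T} M_{T_1} x = 0$, forcing $(x,0) \in \ker M_T$ by semidefiniteness; but $(x,0)$ has nonzero $I_P$-part, contradicting $\ker M_T \subseteq \mathrm{span}(R)$. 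Therefore $M_{T_1}$ is non-degenerate, and Lemma \ref{L_1} yields that $P$ is infinitesimally rigid.

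The hardest and most delicate point is the kernel-localization claim of the last paragraph: positive semidefiniteness alone is insufficient, and one must verify both that the flat-vertex kernel vectors are genuinely supported away from $I_P$ and that the block identification of the second paragraph is exact, i.e. that varying an interior edge of $P$ and reading off its effect on the angles around interior edges of $P$ decouples completely from the complementary region. I expect the combinatorial input — the new vertices sit on $\partial Q$ and are disjoint from the interior edges of $P$ — to carry this, but one must confirm that weak codecomposability introduces no vertex in the interior of $Q$, since any such vertex would instead contribute a three-dimensional kernel block that could overlap $I_P$ and break the argument.
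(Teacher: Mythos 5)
Your overall mechanism is exactly the paper's: glue the triangulation of $P$ to a triangulation of the complement inside a convex polyhedron, apply Theorem \ref{Th_1} to the combined triangulation, identify $M_{T_1}$ with a principal submatrix of $M_T$ (your second paragraph carries out this identification more carefully than the paper, which merely asserts it), and conclude via Lemma \ref{L_1}. The divergence comes from your reading of weak codecomposability. As defined in the paper, the complement of $P$ in the auxiliary convex polyhedron $Q$ is triangulated \emph{without adding new vertices} --- the freedom over plain codecomposability is that $Q$ may be strictly larger than $\mathrm{conv}(P)$, with the extra vertices being genuine (extreme) vertices of $Q$, not points placed in the relative interiors of facets of the hull (a point in the relative interior of a facet of $\mathrm{conv}(P)$ can never be an extreme point of any convex $Q \supseteq P$). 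Under that definition every vertex of the combined triangulation is a vertex of $Q$ in convex position, so $m=0$ \emph{and} $k=0$; Theorem \ref{Th_1} then gives positive definiteness of $M_T$ outright, the principal submatrix $M_{T_1}$ is positive definite, hence invertible, and Lemma \ref{L_1} finishes. This is the paper's short proof. Your closing worry about interior vertices is likewise settled by the definition itself: nothing is ever added in the interior of $Q$, so $m=0$ is automatic.

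Your third paragraph, the real work in your write-up, is therefore solving a harder problem than the theorem requires, and it contains the one genuine gap: the claim that the $k$-dimensional kernel of $M_T$ is supported on edges incident to the flat vertices does not follow from Theorem \ref{Th_1} as stated, which provides only the \emph{dimension} $3m+k$ of the kernel and the count of negative eigenvalues, not the location of kernel vectors. To justify it you would have to import the explicit kernel description from Izmestiev's paper \cite{key7}: the flat-vertex kernel vectors are the interior-edge length variations induced by moving each flat vertex normally to its facet, hence supported on the star of that vertex, together with an independence argument showing these $k$ vectors actually span the kernel. That description is true, and with it your semidefinite-plus-kernel-localization argument is correct linear algebra ($x^{T}Mx=0$ with $M\succeq 0$ forces $Mx=0$); it would in fact establish a slightly stronger statement in which new vertices are permitted on the facets of the hull. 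But as written the key step is asserted rather than proved, and under the paper's actual hypotheses the entire paragraph can simply be deleted.
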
 
Here, \textbf{weakly codecomposable} denotes any polyhedron $P$ which sits inside a convex polyhedron $Q$ such that  the vertices of $P$ form a subset of the vertices of $Q$ and the complement of $P$ in $Q$ (that is, the difference between $Q$ and $P$)
can be 
triangulated without adding new vertices. 
Recall that codecomposability 
 is stronger in the sense that the complement of the 
polyhedron with respect to its convex 
hull can be triangulated without
adding new vertices.

To see how Theorem \ref{Th_2} follows from Theorem \ref{Th_1}, notice that for a weakly convex, decomposable, and weakly codecomposable polyhedron $P$, there must exist a convex polyhedron $P_c$ such that $P_c$ shares all its vertices with $P$, and 
a triangulation $T$ of $P$ that is contained in a triangulation $T_c$ of $P_c$, where the vertices of $T_c$ are precisely those of $P_c$. 
Now, it can be shown that $M_T$ must be a principal minor of $M_T{_c}$.
Thus, by Theorem \ref{Th_1},  $M_T{_c}$ is positive definite and therefore  $M_T$ must be as well.
Since positive definite matrices are invertible, Lemma \ref{L_1} yields the desired result.

\section{An empirical approach
\label{section2}}
\subsection{The flexible Schönhardt polyhedron}
Depicted in Figure \ref{schon}
is the so-called \textbf{Schönhardt polyhedron}, a polyhedron named after his discoverer Erich Schöhnhardt and having the special property of being weakly convex, non-decomposable and (infinitesimally)
flexible as shown by 
Izmestiev (2011) \cite{key6}.
In fact, it is the 
simplest (in the sense
of fewest vertices) flexible polyhedron which doesn't admit a triangulation without interior vertices as was verified
by Schönhardt (1928)
\cite{key11}, making it 
the perfect footing onto which to construct 
polyhedra violating the codecomposability assumption of Theorem \ref{Th_2}. 
\begin{figure}[H]
\centering
\includegraphics[width=6cm,height=6cm]{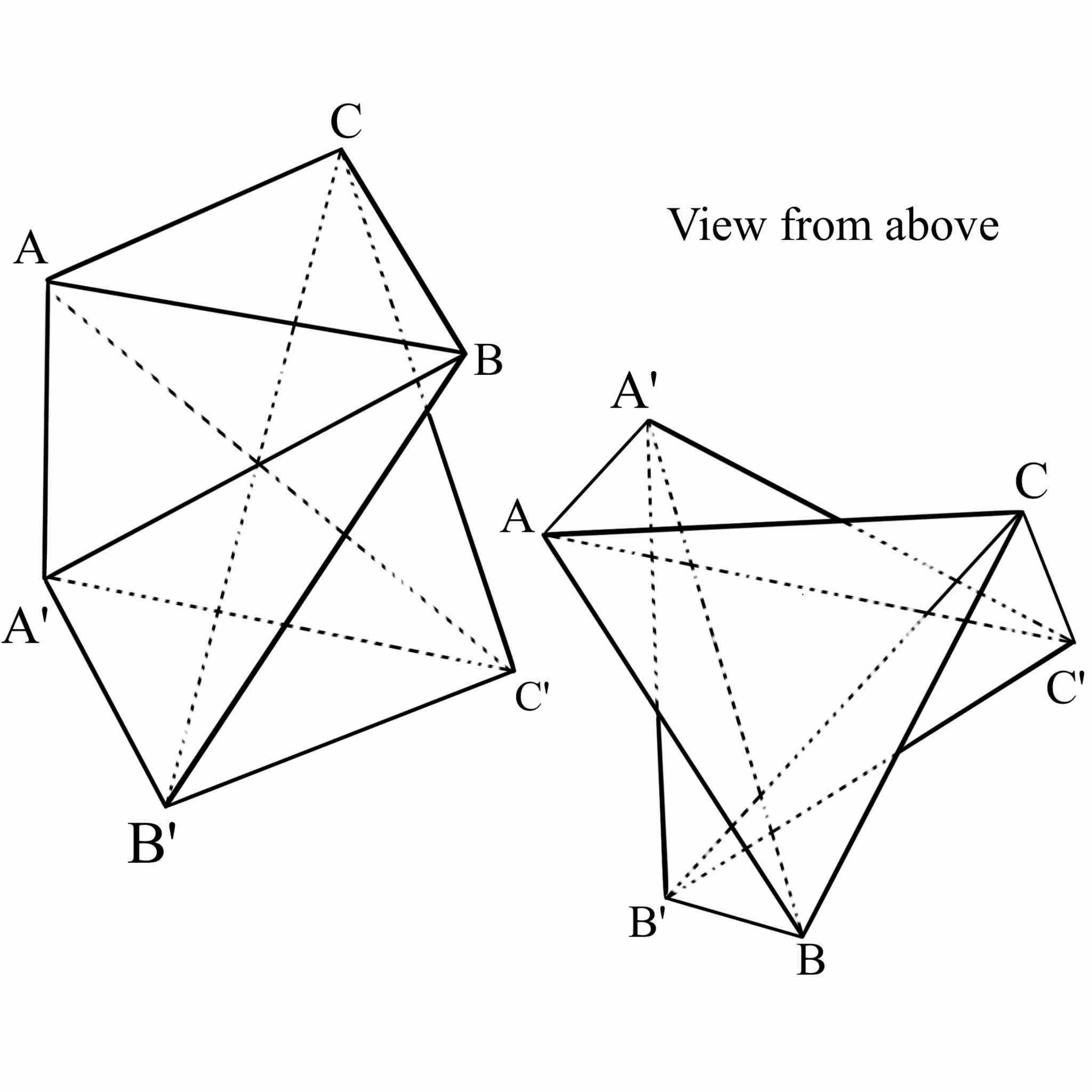}
\caption{The Schönhardt polyhedron}
\label{schon}
\end{figure}
As a figurative example, consider a Schönhardt polyhedron with vertices  \\ \\
\begin{minipage}{0.3\textwidth}
$ A = (1,0,1) \\ \\
 A' =  \left(\cos{\frac{\pi}{6}}, \sin{\frac{\pi}{6}}, -1 \right)
  $
\end{minipage}
\begin{minipage}{0.3\textwidth}
$ B = \left(\cos{\frac{2\pi}{3}},\sin{\frac{2\pi}{3}}, 1\right)
\\ \\
B' = \left(\cos{\frac{5\pi}{6}}, \sin{\frac{5\pi}{6}}, -1 \right) $
\end{minipage}
\begin{minipage}{0.5\textwidth}
$ C =  \left(\cos{\frac{4\pi}{3}}, \sin{\frac{4\pi}{3}}, 1 \right)  \\ \\
C' = (0, -1, -1 ), $
\end{minipage} \\ \\
 \vspace{0.5\baselineskip} \\
in $\mathbb{R}^3$, having edges $AA'$, $AC'$, $BB'$, $BA'$, $CC'$ and $CB'$ and faces $AA'C'$, $AA'B$, $BA'B'$, $BB'C$, $CB'C'$, $CC'A$, $ABC$ and $A'B'C'$.
Later on, we will have to remove faces $ABC$ and $A'B'C'$ from the list since
 we'll restrict  ourselves to 
 weakly convex polyhedra of genus one
 with triangular faces (for the sake of simplicity)
 and admitting the Schönhardt polyhedron as their complement (in the sense that their convex hull contains the Schönhardt polyhedron). 
 
In practice, the following 
three major steps will be 
employed to discern infinitesimally rigid polyhedra from infinitesimally flexible ones:
\begin{enumerate}
  \item Triangulate the polyhedron. This triangulation  $T$ will have $n \in \mathbb{N}$ interior edges.
  \item Calculate all the dihedral angles between the faces of the simplices making up $T$ and meeting at an interior edge of the triangulation of $P$. The total angle around an interior edge will  be the sum of all such dihedral angles.
  \item Determine the eigenvalues of $M_T$ (a $n \times n$ matrix). If zero is not an eigenvalue, $P$ must be infinitesimally  rigid.
\end{enumerate} 
Let us remark that it is in practice not feasible 
 to search for infinitesimal flexible
polyhedra through the linear system of equations 
 \eqref{eq_1} proposed in Definition 
 \ref{system}. In fact, this method would
 require one to stumble exactly upon the right polyhedron and would involve many issues coming
 from rounding errors. However, when working with
$M_T$, this is not the case since
it suffices to just find  two  polyhedra 
 having different signatures. This would then be enough
 to conclude that a polyhedron with
 zero-determinant $M_T$ (which would then be infinitesimally flexible) must exist between them, without
 having to construct it explicitly.

Since we will explicitly construct $P$ (instead of studying random polyhedra), step one is not something we will have to worry about much.
What is of greater importance is to find a way to determine 
the total angle around the interior edges $e_i$, for $ i = 1,..., n$, of $P$ as a function of the edge lengths $l_i$ of the $e_i$ (remember, the entries of $M_T$ are the 
derivatives of total angle around each edge with respect to edge length). 
This is not as straightforward as it sounds at first since 
expressing dihedral angles of tetrahedra  as a function of  edge lengths is 
sometimes
quite cumbersome and unnecessary complicates 
the process even further (for example, using trigonometry and Heron's formula requires calculating the areas of the faces, which we neither need nor want).
A more efficient method to obtain the desired dihedral angles turns out to be through Cayley-Menger determinants.

\subsection{The Cayley-Menger determinant}

A sextuple of the form $S = (e_{12}, e_{13}, e_{14}, e_{23}, e_{24}, e_{34})$ determines a non-degenerate
  Euclidean tetrahedron
  (that is, not all points of $S$ are lying in the same plane) 
  if and only if the following two conditions are satisfied:

\begin{figure}[H]\label{fig2}
\centering
\includegraphics[width=6cm,height=6cm]{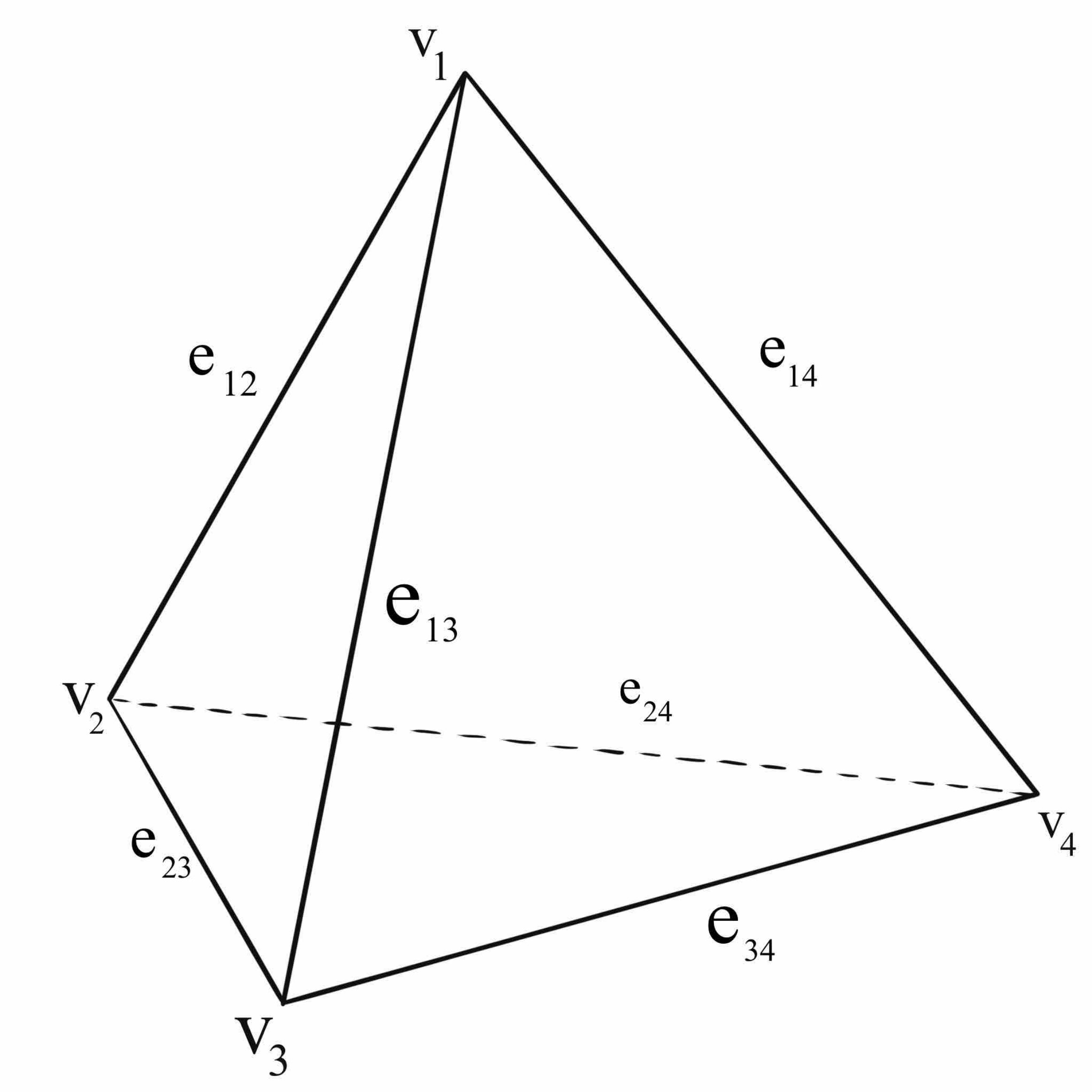}
\caption{A tetrahedron}
\end{figure}
\begin{enumerate}
     \item All face triplets of $S$ are 
     of the form $F = (e_{ij}, e_{ik}, e_{jk})$, where $F$ satisfies $e_{ij} < e_{ik} + e_{jk}$, $e_{ik} < e_{ij} + e_{jk}$ and $e_{jk} < e_{ij} + e_{ik}$;
  \item The determinant $D$ of the following matrix is strictly greater than $0$:
$$ \textnormal{CM} :=  \begin{pmatrix}
0 & e_{12}^2 & e_{13}^2&e_{14}^2 & 1 \\
e_{12}^2 & 0 & e_{23}^2 &  e_{24}^2&1 \\
e_{13}^2 &  e_{23}^2 & 0& e_{34}^2& 1\\
e_{14}^2 & e_{24}^2 & e_{34}^2 & 0 & 1\\
1 & 1 & 1 & 1 & 0
\end{pmatrix}$$
\end{enumerate}

The first condition simply states that the tetrahedron has four faces with non-negative edge lengths satisfying the triangle inequality.
The determinant $D$ is the Cayley-Menger determinant. 
Its geometric significance becomes apparent when considering how it relates to the volume $V$ of the tetrahedron, that is,
$$ D = 288 V^2. $$
See Fiedler (2011)
\cite{key2} for a proof.
In order to compute dihedral angles, it is necessary to look at a specific minor of the matrix CM. 
Given any edge $e_{ij}$ of the tetrahedron, the term $e_{ij}^2$ appears twice in CM, namely in the two rows and columns $i$ and $j$.
To obtain the desired cofactor, defined by
$$D_{ij} := (-1)^{k+l} \cdot (k,l) \: \textnormal{- minor of CM},$$
it suffices to localize the two terms $e_{ij}^2$ in CM and delete row $k$ and 
column $l$ which does not contain the
term $e_{ij}^2$, evaluate the
determinant and
multiply by $(-1)^{k+l}$. It is important that the
$5$th row and column of CM does not take part in 
this excision process.

As an example, in order to obtain $D_{12}$,  delete row 3 and column 4 of CM 
(or, equivalently, row $4$ and column $3$), calculate the determinant of this smaller matrix and multiply it by minus one, since $3 + 4 = 7$. \\ \\
Denoting the interior dihedral angle at the edge $e_{ij}$ by 
$\alpha_{ij}$, the following relationship between $D$ and $\alpha_{ij}$ can be derived:
\begin{equation}\label{eq1}
 \alpha_{ij} = \arccos{ \left( \frac{D_{ij}}{ \sqrt{2e_{ij}^2D+  D_{ij}^2}} \right) }
\end{equation}
There are many ways to prove this, see for instance Fiedler (2011)
\cite{key2}.

\section{Rotational flexibility
\label{section3}}
\subsection{When, how and why does a certain polyhedron twist?}
The first part of this section is devoted
to  the Schönhardt 
polyhedron. In particular, we would like
to understand 
which properties 
discern it from  other, infinitesimally
rigid, polyhedra. 
Let us  therefore start by recalling \\ \\
$\textbf{Cauchy's rigidity theorem}$: If two convex polyhedra in $\mathbb{R}^3$ have pairwise congruent faces, then the two polyhedra must themselves be congruent. \\\\
What is meant by requiring \textbf{congruence} is that the two faces (or sets of points in general) can be transformed into each other by means of isometries, that is, rigid motions in $\mathbb{R}^3$ which are combinations of translations, 
rotations and reflections, with no changes in size allowed.
Even though the untwisted and twisted states of the Schönhardt polyhedron have the same polyhedral net, they form a pair of incongruent octahedra.
Yes, two corresponding faces are congruent to each other and 
two corresponding edges have the same convexity character (one concave edge in each side),
 however, the twisted and untwisted states of the
 Schönhardt polyhedron  do not constitute  convex polyhedra and hence Cauchy's theorem is not applicable to begin with. 
\begin{figure}[H]
\centering
\includegraphics[width=7cm,height=4cm]{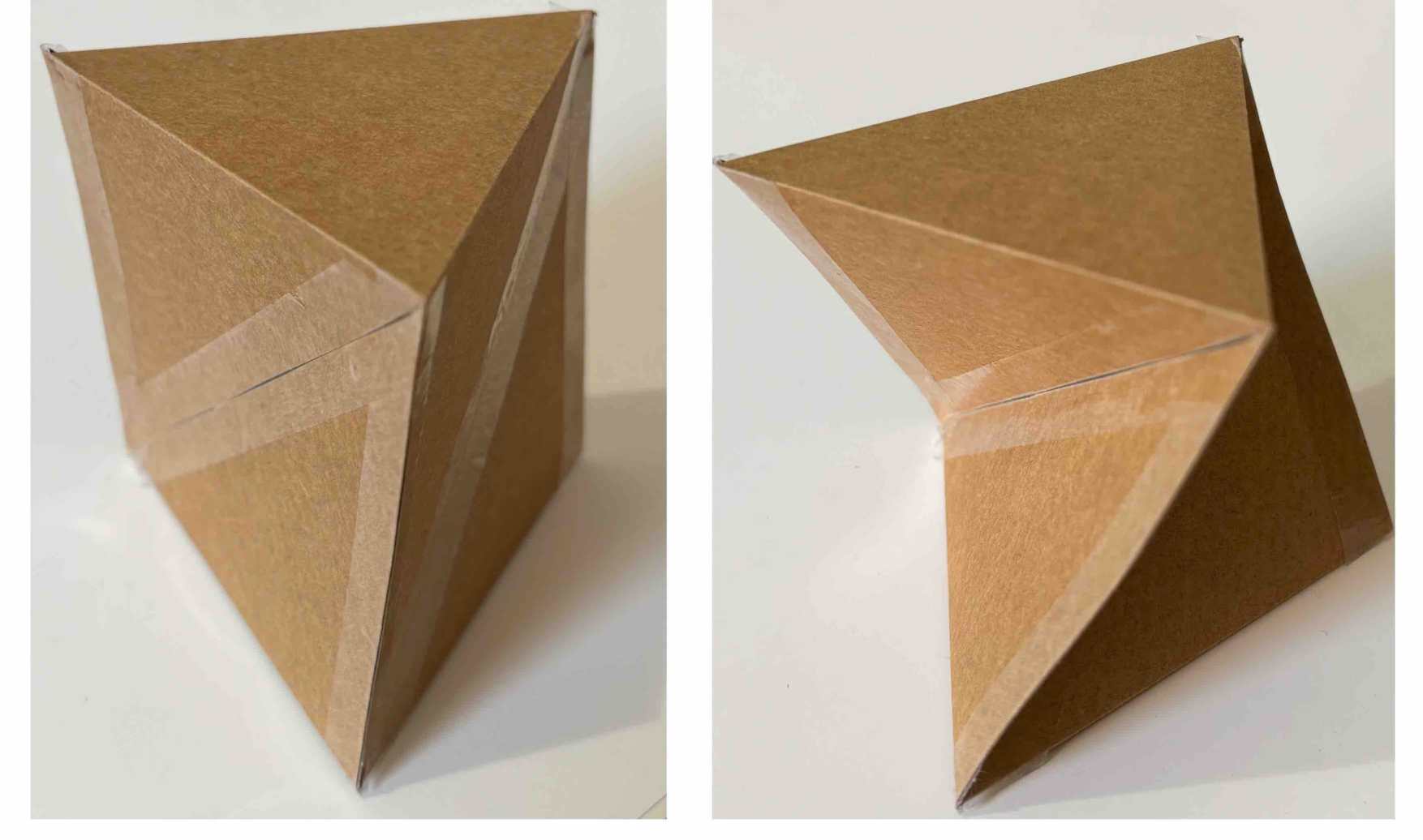}
\caption{Cardboard model of the 
Schönhardt polyhedron}
\label{model}
\end{figure}
As is shown in Figure \ref{model}, the untwisted 
Schönhardt polyhedron is not an upright triangular prism, 
but a so-called concave, triangular gyroprism. Let us also
remark that the two states of the Schönhardt polyhedron can not be continuously deformed into each other. This means in particular that 
if the model depicted
in Figure \ref{model} was
made 
of a perfectly stiff material, one wouldn't be able
to twist it from one state to the other without 
disassembling and rebuilding it. 
Now, if one were to play around with it for a while, one would make the following \textbf{experimental observation} ($\textbf{EO}$): \\ \\
$\textbf{EO1}$: There is no flex of the Schönhardt polyhedron that does not involve a twist. \\ \\
Even though this seems to be a trivial statement, it is far from being a superfluous one. \\
\begin{figure}[H]
\centering
\includegraphics[width=7cm,height=5cm]{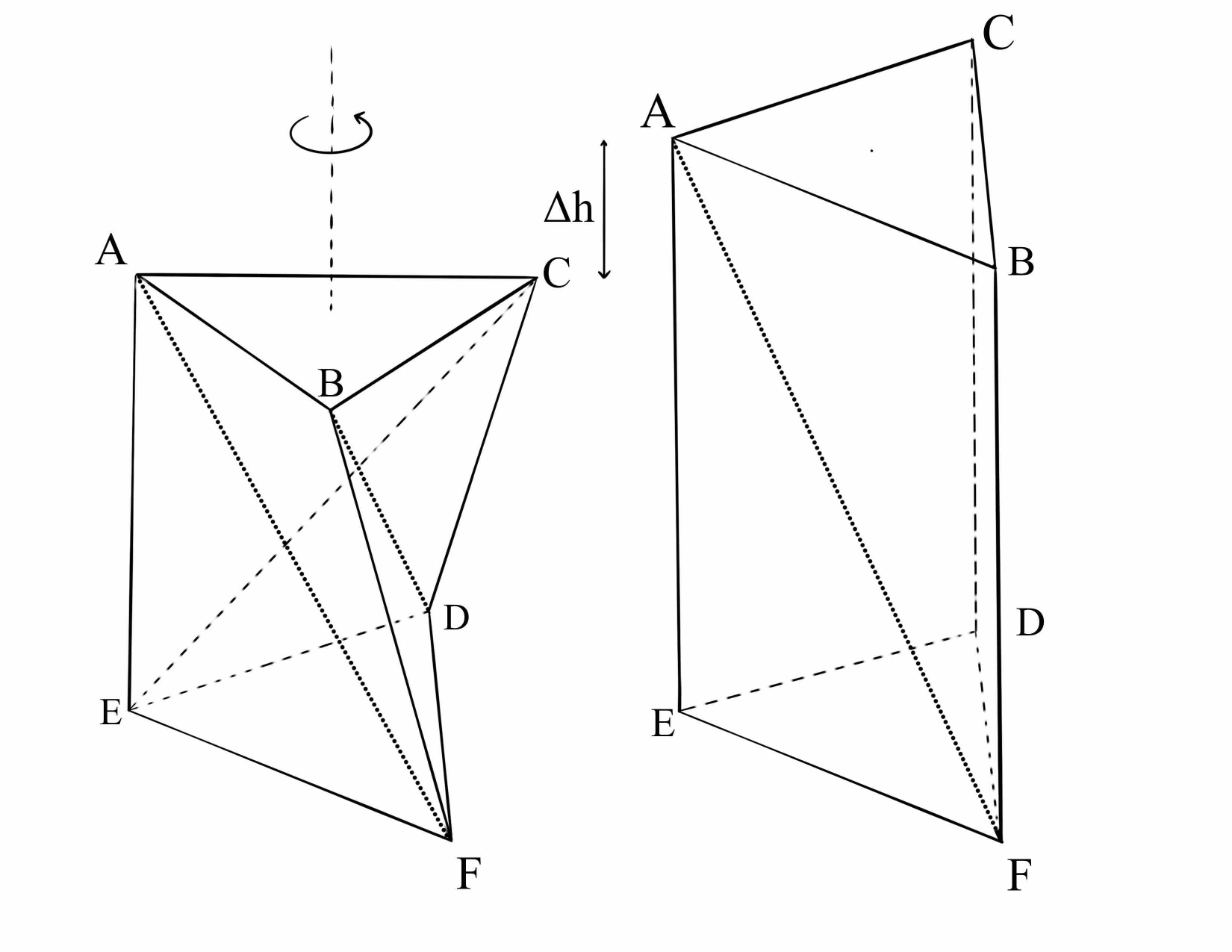}
\caption{Twisting the Schönhardt polyhedron}
\label{twist}
\end{figure}

Still and all,  this is not the end of the story.
Observe how the motion of untwisting the Schönhardt polyhedron induces a height augmentation $\Delta h$ of the structure and that every vertex of the top triangle is prescribed to move on a cylinder 
(while the bottom basis is kept in a fixed position). 
It is then natural to believe
that this $\Delta h$ can be 
expressed in terms of some of the
 vital polyhedral characteristics 
(such as height, angle and edge lengths of the two equilateral triangles). \\
Moreover, one can easily convince oneself,
by trying to construct a model for
instance, 
that
 the polyhedral net must not be made out of a perfect rectangle but rather a parallelogram.
We can denote this little overhang by $m$,
as depicted in Figure \ref{net}.

\begin{figure}[H]
\centering
\includegraphics[width=10cm,height=4cm]{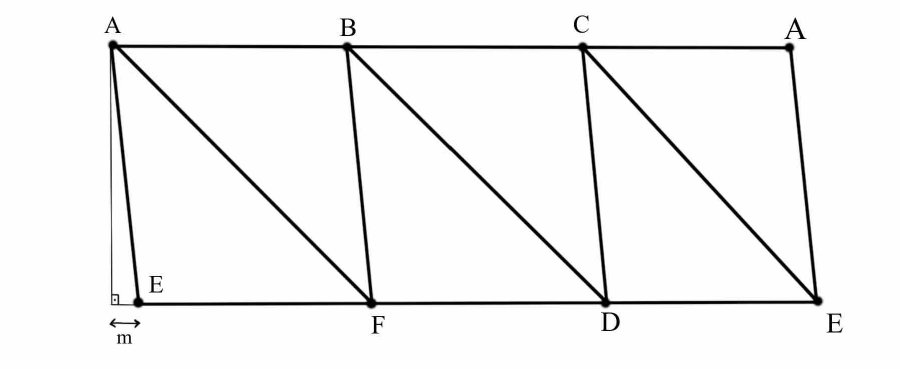}
\caption{The Schönhardt polyhedron's net with
top and bottom faces removed}
\label{net}
\end{figure}

As already remarked, we can keep the bottom basis fixed and imagine an infinitesimal flex.
During that process, all edge lengths are kept at constant lengths and so the vertex $A$ (for instance) of the top basis is not only constrained to move on a cylinder of radius $r$, but also on a circle of radius $c$, 
(corresponding to the edge length $EA$). Figure 
\ref{fig4} describes how the (infinitesimal)
twist is done:

\begin{figure}[H]
\centering
\includegraphics[width=13cm,height=12cm]{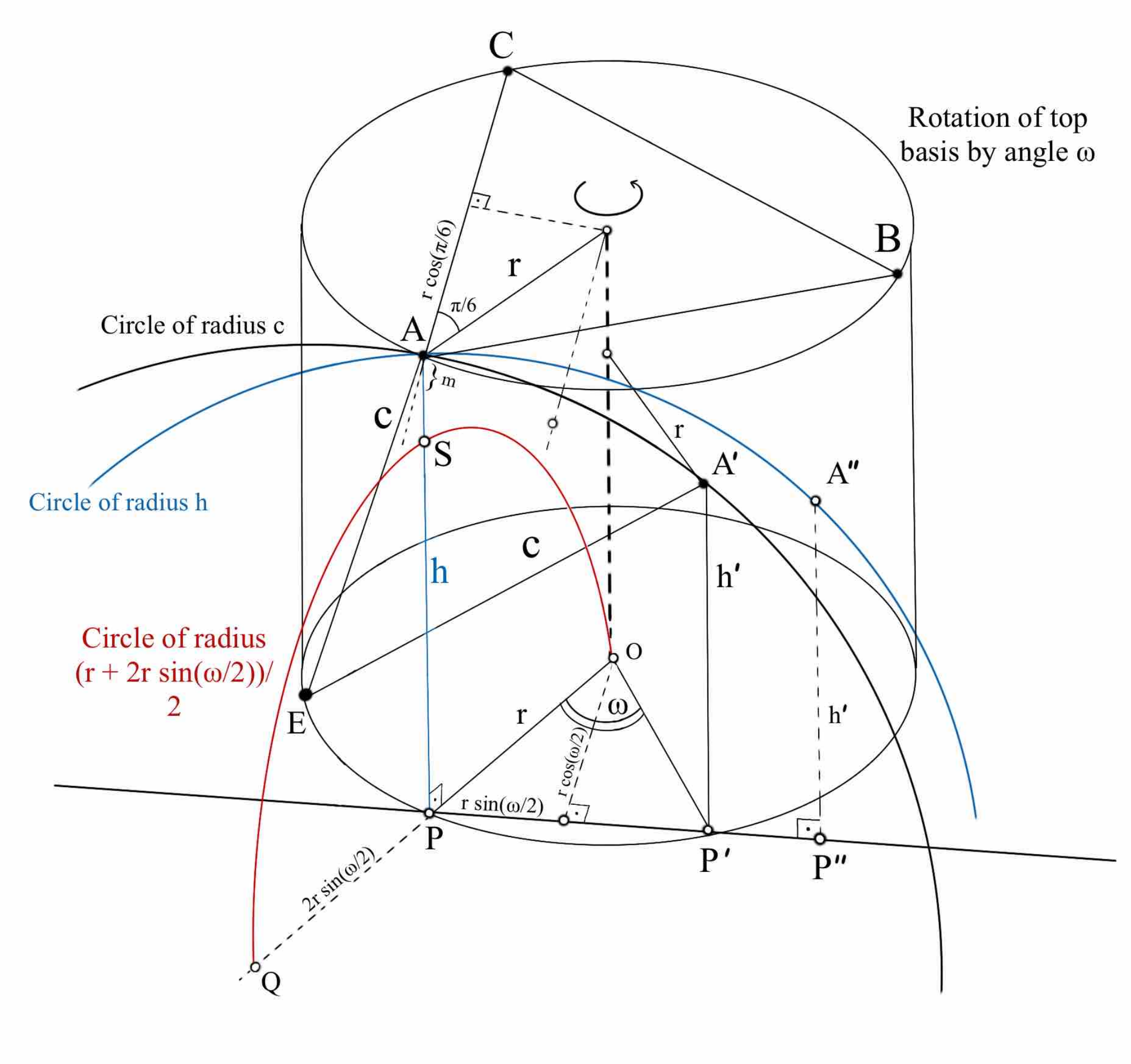}
\caption{Twisting the Schönhardt polyhedron}
\label{fig4}
\end{figure}

\begin{proposition}
    With notation as in Figures  
  \ref{net} and \ref{fig4}, the difference of the square  
    of the heights of the  twisted and
untwisted Schönhardt polyhedron denoted, respectively, by $h$ and 
$h'$, is given by
\begin{equation}\label{wunderlich}
    h^2 - h'^2 = 2r^2 \sin{ \left(\frac{\omega}{2} \right)}.
\end{equation}
\end{proposition}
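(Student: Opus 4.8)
The plan is to collapse the three–dimensional twisting motion onto the motion of a single vertex on a cylinder, and then to read the height off directly from the length of one slant edge. First I would fix the bottom equilateral triangle in the plane $z=0$, centred on the vertical symmetry axis. As recorded in Figure~\ref{fig4}, throughout the admissible motion the top triangle stays planar, horizontal, equilateral and concentric with the base, so that each of its vertices---in particular the chosen vertex $A$---is confined to the cylinder of radius $r$ about the axis. This is the only structural input I need: it lets me record the position of $A$ by its height $z$ above the base plane together with its angular position on the radius-$r$ circle, and it fixes the neighbouring base vertex $E$ at distance $r$ from the axis as well.

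Next I would use that the slant edge $EA$ keeps its length $c$ during the entire twist; this is exactly the \q{circle of radius $c$} appearing in the statement. Splitting $|EA|^2$ into its horizontal and vertical contributions, the horizontal part is the chord joining the projections of $A$ and $E$ onto the radius-$r$ circle, namely $2r\sin(\Delta\theta/2)$ where $\Delta\theta$ is their angular offset, while the vertical part is simply $z^2$. This yields the explicit height–versus–twist relation $z^2 = c^2 - 4r^2\sin^2(\Delta\theta/2)$, equivalently $z^2 = c^2 - 2r^2(1-\cos\Delta\theta)$.

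To finish I would evaluate this relation at the two angular offsets corresponding to the twisted and untwisted configurations. By the convention fixed in Figures~\ref{net} and~\ref{fig4} these two offsets differ by the twist $\omega$ and are placed symmetrically about the symmetric (equilateral) position; subtracting the two expressions for $h^2$ and $h'^2$ and applying $\sin^2 X-\sin^2 Y=\sin(X+Y)\sin(X-Y)$ leaves a product of two sines. The half-sum of the offsets is a fixed angle dictated by the equilateral base, whose sine equals $\tfrac12$, and this is precisely what collapses the factor $4r^2$ to $2r^2$; the half-difference is $\omega/2$ and survives inside the remaining sine, giving Equation~\ref{wunderlich}.

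The main obstacle I anticipate is bookkeeping rather than genuine analysis. One has to justify that the top face really does stay planar, horizontal and centred during the motion, so that the cylinder constraint is exact and $A$ stays at radius $r$; and one has to match the figure's twist angle $\omega$ to the angular offsets of $A$ in the two states with the correct signs and reference, since any slip here rescales or reflects the argument of the sine and destroys the clean half-angle. Once the motion is genuinely pinned to the cylinder and the two offsets are read off correctly, the remainder is the single trigonometric identity above.
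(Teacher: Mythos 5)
Your argument is correct, but it reaches Equation~\ref{wunderlich} by a genuinely different route than the paper. The paper's proof is synthetic and lives entirely in Figure~\ref{fig4}: it first extracts the overhang relation $m = r\left(\cos\left(\frac{\omega}{2}\right)-\cos\left(\frac{\pi}{6}\right)\right)$ and the chord length $\textnormal{distance}(P,P') = 2r\sin\left(\frac{\omega}{2}\right)$, then \emph{constructs} the length $\sqrt{2r^2\sin\left(\frac{\omega}{2}\right)}$ as the geometric mean of $2r\sin\left(\frac{\omega}{2}\right)$ and $r$, using the semicircle on the diameter $2r\sin\left(\frac{\omega}{2}\right)+r$, and finishes with the Pythagorean theorem in the right triangle $PA''P''$; the pivotal step $\textnormal{distance}(P,P'')=\textnormal{distance}(P,S)$ is simply \q{noticed} from the figure. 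You instead work analytically on the cylinder: the slant-edge relation $z^2 = c^2 - 4r^2\sin^2(\Delta\theta/2)$ together with $\sin^2 X-\sin^2 Y=\sin(X+Y)\sin(X-Y)$ gives, for two states whose offsets differ by $\omega$ and have half-sum $\pi/6$, exactly $4r^2\sin\left(\frac{\pi}{6}\right)\sin\left(\frac{\omega}{2}\right)=2r^2\sin\left(\frac{\omega}{2}\right)$. Your version buys transparency: it exposes the constant $2r^2$ as $4r^2\sin\left(\frac{\pi}{6}\right)$ and makes visible that the formula is exact only under that angular bookkeeping --- with a different reference state (say offsets $\pi/3$ and $\pi/3-\omega$, measuring from the octahedral position) one would get $4r^2\sin\left(\frac{\pi}{3}-\frac{\omega}{2}\right)\sin\left(\frac{\omega}{2}\right)$ instead, so your identification of the half-sum is not optional decoration but the entire content. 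The paper's version buys a ruler-and-compass realization of the same quantity. Be aware, though, that the soft spot is the same in both proofs and you have inherited rather than removed it: your assertion that the half-sum of the two offsets equals $\pi/6$ (justified only as \q{dictated by the equilateral base}) plays precisely the role of the paper's unproved equality $\textnormal{distance}(P,P'')=\textnormal{distance}(P,S)$ --- in each case the normalization that produces the factor $\tfrac{1}{2}$ is imposed by the figure's conventions rather than derived. Your remaining standing assumptions (top face planar, horizontal, concentric; both triangles of circumradius $r$) coincide with the paper's description of the twisting motion, so modulo that shared convention your proof is complete.
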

\begin{proof}
Note that this formula already appears 
in Wunderlich (1965)
\cite{key13}, albeit 
derived by different means. 
Here we present a purely 
     geometric proof
    using Figure \ref{fig4}.
   Hollow points  indicate that they do not belong to the set of vertices of the Schönhardt polyhedron. \\
   Now, 
since $ABC$ is an equilateral triangle, all its angles must be equal to $\pi/3$.
Thus, by construction, the two segments of length $r\cos{\left( \frac{\pi}{6} \right)} $ and $r\cos{\left( \frac{\omega}{2} \right)} $ are parallel and their difference is equal to $m$, that is,
$$ m = r\cos{\left( \frac{\omega}{2} \right)}  - r\cos{\left( \frac{\pi}{6} \right)}  = r\left(\cos{\left( \frac{\omega}{2} \right)} - \cos{\left( \frac{\pi}{6} \right)}\right). $$
With the fact that cosine is bounded by $1$ and $1 -  \cos{\left( \frac{\pi}{6} \right)} = 1 - \sqrt{3}/2 \approx 0.134$, we find that the overhang $m$ must satisfy
$$ 0 \leq m \leq 0.134 \cdot r.$$
On the other hand, observe that
$$ \textnormal{distance}(P, P') = 2r \sin{ \left(\frac{\omega}{2} \right)}.$$

Constructing the blue circle of radius $h$ and center $P$ gives birth, upon projecting the point $A'$ onto the blue circle and connecting it to the line that passes through the
points $P$ and $P'$, to the right angled triangle $PA''P''$.
In order to find the distance between $P$ and $P''$, we can extend the line segment connecting $O$ and $P$ by a length of $ 2r \sin{ \left(\frac{\omega}{2} \right)}$.
The trick consists in taking $2r \sin{ \left(\frac{\omega}{2} \right)} + r$ to be the diameter of a new circle. 
This circle (depicted in red) has of course radius $ \left(2r \sin{ \left(\frac{\omega}{2} \right)} + r \right)/2$ (which is the arithmetic mean of $2r \sin{ \left(\frac{\omega}{2} \right)}$ and $ r$) and intersects the segment $h$ at the point 
$S$ which, by the geometric properties of semi-circles, is at a distance of $\sqrt{2r^2 \sin{ \left(\frac{\omega}{2} \right)}}$ from the point $P$.
The length of $\sqrt{2r^2 \sin{ \left(\frac{\omega}{2} \right)}}$ corresponds to the geometric mean of $2r \sin{ \left(\frac{\omega}{2} \right)}$ and $ r$.
The only thing that is left to do is to notice that 
$$ \textnormal{distance}(P, P'') = \textnormal{distance(P, S)} $$
and apply the Pythagorean theorem to the triangle $PA''P''$
$$h^2 = h'^2 + \left(\sqrt{2r^2 \sin{ \left(\frac{\omega}{2} \right)}}\right)^2,$$
that is,
$$h^2 - h'^2 = 2r^2 \sin{ \left(\frac{\omega}{2} \right)}.$$
\end{proof}
It remains to show that the Schönhardt polyhedron is indeed infinitesimally flexible.
\begin{theorem}
    The $\pi/6$-twisted Schönhardt
    polyhedron is infinitesimally
    flexible.
\end{theorem}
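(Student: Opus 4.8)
The plan is to produce an explicit nontrivial infinitesimal isometric deformation $q \colon V \to \mathbb{R}^3$ straight from Definition \ref{rigid}, rather than through the Hessian $M_T$. This detour is forced on us: the Schönhardt polyhedron admits no triangulation without interior vertices, so Lemma \ref{L_1} and the matrix machinery do not apply, and the geometric picture underlying Equation \ref{wunderlich} is the natural tool. Concretely, I would pin the bottom triangle (so $q_D = q_E = q_F = 0$) and let the top triangle $ABC$ undergo an infinitesimal rigid motion consisting of a rotation about the central vertical axis with angular speed $\dot{\omega}$ together with a vertical translation of speed $\dot{h}$. Because the top moves rigidly and the bottom is fixed, the three top edges and the three bottom edges automatically satisfy $\langle p_i - p_j, q_i - q_j \rangle = 0$, so only the six slant edges $AE, AF, BD, BF, CD, CE$ remain to be checked.

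Next I would use the threefold symmetry to collapse these six conditions to two. The slant edges split into the three \emph{short} edges $AE, BF, CD$ and the three \emph{long} edges $AF, BD, CE$, and under the symmetric twist all edges within a class vary identically. Writing each slant length through $L^2 = 2r^2\bigl(1 - \cos\Delta\bigr) + h^2$, where $\Delta$ is the (oriented) angular gap between the endpoints and $h$ the vertical separation, the requirement that lengths be preserved \emph{to first order} becomes the pair of linear equations
\begin{align*}
2r^2 \sin(\Delta_s)\,\dot{\omega} + 2h\,\dot{h} &= 0, \\
2r^2 \sin(\Delta_\ell)\,\dot{\omega} + 2h\,\dot{h} &= 0,
\end{align*}
one for the short gap $\Delta_s$ and one for the long gap $\Delta_\ell$. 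A nontrivial solution $(\dot{\omega}, \dot{h})$ exists precisely when the two equations coincide, that is, when $\sin(\Delta_s) = \sin(\Delta_\ell)$.

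The crux is to verify this coincidence at the $\pi/6$-twisted configuration. With the coordinates listed above, the top sits at the angles $0, \tfrac{2\pi}{3}, \tfrac{4\pi}{3}$ and the bottom at $\tfrac{\pi}{6}, \tfrac{5\pi}{6}, \tfrac{3\pi}{2}$, so the short gap is $\tfrac{\pi}{6}$ and the long gap is $\tfrac{5\pi}{6}$; these are supplementary, whence $\sin(\Delta_s) = \sin(\Delta_\ell)$ and the two equations reduce to the single relation $2h\,\dot{h} = r^2\,\dot{\omega}$. This is exactly the first-order content of Wunderlich's formula at the untwisted reference: differentiating $h^2 - h'^2 = 2r^2\sin(\omega/2)$ (Equation \ref{wunderlich}, with notation as in Figures \ref{net} and \ref{fig4}) gives $2h\,dh = r^2\cos(\omega/2)\,d\omega$, which at $\omega = 0$ is precisely $2h\,\dot{h} = r^2\,\dot{\omega}$. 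Choosing any $\dot{\omega} \neq 0$ and $\dot{h} = r^2\dot{\omega}/(2h)$ therefore yields a genuine infinitesimal isometric deformation that twists the top relative to the fixed bottom. Note that we only claim first-order preservation of the long edges; the flex need not extend to a finite motion, consistent with the Schönhardt polyhedron being finitely rigid yet infinitesimally flexible.

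Finally I would confirm that $q$ is nontrivial in the sense of Definition \ref{rigid}. Were $q$ the restriction of a Killing field $K$, then $K$ would vanish at the three non-collinear points $D, E, F$ and hence vanish identically; but $q$ rotates $ABC$ about the axis with $\dot{\omega} \neq 0$, so it is nonzero at $A, B, C$, a contradiction. Thus $q$ is a nontrivial infinitesimal flex and the $\pi/6$-twisted Schönhardt polyhedron is infinitesimally flexible. I expect the main obstacle to be the crux step: making precise why the supplementary-gap coincidence $\sin(\Delta_s) = \sin(\Delta_\ell)$ is special to the $\pi/6$ twist — for a generic twist angle the two equations are independent, forcing $(\dot{\omega}, \dot{h}) = 0$ and hence only trivial $q$ — and tying this coincidence cleanly to the reference configuration used in deriving Equation \ref{wunderlich}.
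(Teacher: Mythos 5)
Your proof is correct, and it reaches the conclusion by a genuinely different decomposition of the work than the paper does. The paper also fixes $D,E,F$, but it builds the flex from the slant edges inward: $q(A)$ is chosen orthogonal to the plane $AEF$ (an infinitesimal rotation of $A$ about the line $EF$, so the edges $AE$, $AF$ are preserved by construction), $q(B),q(C)$ follow by the threefold symmetry, and the effort then goes into showing that at $\theta=\pi/6$ the diagonals are critical (via $AF^2 = 1+\tfrac{2}{\sqrt{3}}\sin(\pi/3+\theta)$ attaining its maximum there) and that the three vectors are tangent to the cylinder, so that $ABC$ moves as a rigid twist. You do the opposite: by positing a screw motion of the top (rotation $\dot{\omega}$ plus vertical translation $\dot{h}$), rigidity of the top is automatic and all the work lands on the six slant edges, which symmetry collapses to a $2\times 2$ homogeneous linear system with determinant proportional to $\sin\Delta_\ell-\sin\Delta_s$, degenerate exactly when the gaps $\pi/6$ and $5\pi/6$ are supplementary. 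Your route buys two things the paper's argument leaves implicit: a transparent reason why $\pi/6$ is the \emph{unique} flexible twisting angle (the determinant condition, replacing the criticality computation), and a clean nontriviality check (a Killing field vanishing at the non-collinear $D,E,F$ vanishes identically), which the paper does not spell out. Two small cautions: the common sign you place on the $\sin(\Delta)\,\dot{\omega}$ terms silently uses that all six angular gaps open in the same rotational sense — true for the stated coordinates, where each bottom endpoint leads its top partner counterclockwise by $\pi/6$ or $5\pi/6$, but worth one explicit line; and your aside identifying $2h\,\dot{h}=r^2\dot{\omega}$ with Wunderlich's formula \q{at $\omega=0$} conflates the paper's $\omega$ (measured from the overhang configuration of Figure \ref{fig4}) with your twist parameter, though this remark is not load-bearing, since your linear system already establishes the flex on its own.
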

\begin{proof}
By Definition \ref{rigid}, we know that a polyhedron is infinitesimally rigid if every isometric  infinitesimal deformation is trivial in first order, so that
 each motion corresponds to a rigid 
body motion.
If $\nu = \{v_1,..., v_n\}$ 
denotes the set of vertices 
of the polyhedron $P$, any such motion 
can be expressed through a map
$q : \nu \to \mathbb{R}^3$ satisfying
\begin{equation}\label{eq3}
 \frac{d}{dt}\biggr|_{\substack{t = 0}} \textnormal{distance}(v_i + tq(v_i),   v_j + tq(v_j)) = 0 
 \end{equation}
for every edge $v_iv_j$ of $P$.
This is equivalent to 
\begin{equation}
\langle v_i - v_j, q_i - q_j \rangle = 0
\end{equation}
because \eqref{eq3} forces  $2(v_i + tq(v_i)q_i + 2(v_j + tq(v_j)q_j = 0$ which, at $t = 0$, is just $v_iq_i + v_jq_j = 0$. 
    So, in order to show infinitesimal
    flexibility, we need to find at least one  infinitesimal isometric deformation that is non-trivial in first order. 
    
In the case of the Schönhardt polyhedron, $\nu = \{A, B, C, D, E, F\}$. Assume furthermore that the top basis is twisted by an angle of $\theta = \pi/6$ with respect to the bottom basis.
Here, $\theta$ denotes the total rotation, that is, the angle $\omega$ (as 
depicted in Figure \ref{fig4}) and the smaller angle stemming from the overhang $m$ added together.
The reason for choosing the particular value of $ \pi/6$ is that this corresponds
 to the unique choice of \q{twisting angle} $\theta$ that makes the polyhedron infinitesimally flexible. 
 
To see why, assume for simplicity that the side lengths of the equilateral triangles $ABC$ and $DEF$ are equal to $1$ and apply the Pythagorean theorem to the  triangle $AFE$ such to obtain
$$AF^2 = 1^2 + \left(\sqrt{ \frac{ \sin{( \pi/3 + \theta)}}{\cos{(\pi/6)}}}\right)^2 $$
and 
$$AF^2 = 1 + \frac{2}{\sqrt{3}} \sin{( \pi/3 + \theta)}. $$
This function hits its maximum at $\theta = \pi/6$ and therefore forces its derivative to have a zero at that value.
In other words, the edge length $AF$ does not change (up to first order) with respect to $\theta$, or, by imagining that the twist is executed in a uniform and symmetric manner, $AF$ is  kept constant with respect to time
(similar holds of course for the other diagonals $BD$ and $CE$).
Of course,
this will then be used 
to study infinitesimal isometric 
deformations. 

Let $q(D) = q(E) = q(F) = 0$ or, equivalently,  keep the bottom basis of the polyhedron 
 at a fixed position and apply a non-zero velocity vector (pointing to the outside of the polyhedron) to the remaining vertices such that $q(A)$ is
 orthogonal to the plane $AEF$ and $q(B)$ and $q(C)$ are the images of $q(A)$ under rotation by an angle of $2\pi /3$ and $4\pi/3$ around the axis of the cylinder of radius $r$.
Clearly, the edge lengths of the triangle $ABC$ are kept constant since no transformation is applied to its vertices.
Since $q(A)$ is just an infinitesimal rotation around the edge $EF$ of the vertex $A$, the side lengths of the triangle $EAF$ are preserved up to first order.

By symmetry, $q(B)$ and $q(C)$ have a similar effect on the vertices $B$ and $C$ and thus the side lengths of the triangles $DFB$ and $CDE$ are also infinitesimally preserved.
Taking  our particular choice of $\theta = \pi/6$ into account, the planes $AEF$, $BDF$ and $CDE$ must pass through the center of the triangle $ABC$ and so $q(A)$, $q(B)$ and $q(C)$ are tangent to the cylinder of radius $r$ and 
 axis passing through the center of $ABC$.
This on its own means, by taking the symmetry of the motion into consideration, that $ABC$ does indeed \textit{twist} around the axis of the cylinder and therefore naturally preserves its side lengths. 

Hence, we have found a non-trivial infinitesimal isometric transformation (corresponding to a twist) and can conclude that this Schönhardt polyhedron (of angle $\theta = \pi/6$) is indeed infinitesimally flexible. 
\end{proof}
Remember, our initial  goal is to build a weakly 
convex and decomposable structure violating the codecomposability hypothesis of the 
main conjecture and, in the (quite utopian) best-case  scenario, obtain an infinitesimally  flexible polyhedron that will therefore disprove it. \\ \\
Now, the absence of a suitable triangulation for the Schönhardt polyhedron  is due to the fact that any such tetrahedron would have vertices contained in the top and bottom basis of the
latter and therefore edges that coincide with the diagonals of the Schönhardt polyhedron.
However, the Schönhardt polyhedron has  no internal diagonals and so any simplex of a potential triangulation could never lie entirely inside, thus the impossibility to decompose it. \\ \\
Before investigating more complicated arrangements,
 it will prove useful to set up some terminology.

\subsection{Some weakly convex, non-codecomposable polyhedra}
Let us start by remarking that whenever we 
mention notions of rigidity and flexibility in
what follows, we
are referring to infinitesimal rigidity and
infinitesimal flexibility. 

Since we would like to examine 
the family of weakly convex, decomposable polyhedra having the Schönhardt polyhedron as its complement, we 
set up some terminology and  refer to them as $\textbf{$T$-polyhedra}$, $T$ 
standing for \textit{twist} (even though most of them will be perfectly rigid), 
Every such polyhedron can be 
partitioned into  three regions: 

\begin{figure}[H]
\centering
\includegraphics[width=6.5cm,height=6.5cm]{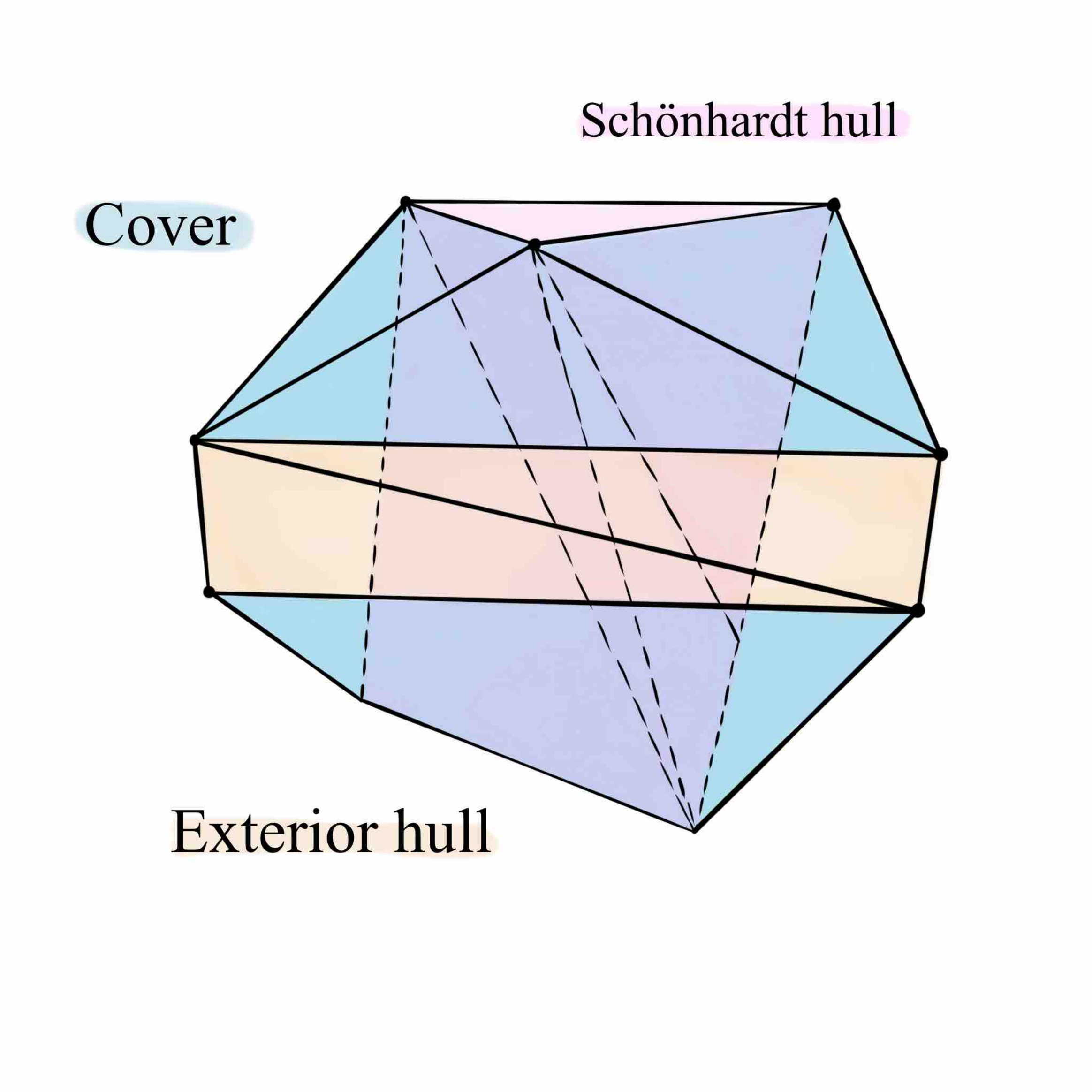}
\caption{A $T$-polyhedron}
\label{T-poly}
\end{figure}
The \textbf{cover} at the top and bottom, the \textbf{Schönhardt hull} lying in the middle of the structure and the \textbf{exterior hull}, a polygonal ring connected to the cover.
Notice how vertices belonging to the cover must necessarily be connected to vertices of the Schönhardt hull.
Of course, nothing prevents the exterior hull from being trivial (in the sense of nonexistent) such that all vertices of the upper cover are immediately connected to the bottom cover.
However, polyhedra of that kind will not be of great use to us since they most certainly will never twist. 
To see why, observe the following: \\\\
$\textbf{EO2}$: Any potential infinitesimal
twist of a $T$-polyhedron will preserve the geometry of the cover, that is, the cover can only perform an infinitesimal
rigid body motion. \\ \\
In other words, the distance between vertices of the cover and vertices of the Schönhardt hull to which they are connected remains constant during any potential twist. 
Thus, if the exterior hull is trivial, the rigidity of the cover will prevent the polyhedron from twisting. 
One could speak of \textit{induced} rigidity stemming from the exterior hull and spreading onto the whole $T$-polyhedron. In that spirit: \\ \\
$\textbf{EO3}$: If the exterior hull is rigid (can not be twisted), then the $T$-polyhedron is itself rigid (can not be twisted either).  \\ \\
So, in the case of convex exterior hulls, it is not necessary  to calculate any eigenvalues or Cayley-Menger determinants since $\textbf{EO3}$ allows us to immediately conclude   that the polyhedron is rigid, which already rules out 
quite a few 
candidates. Alright, but there is even more. \\ \\
With $\textbf{EO1}$ in mind, note how the rotational flexibility of the Schönhardt polyhedron forces the exterior hull to not only  have rotational flexibility as well, but to rotate in one and the same manner. 
For instance, constructing a $T$-polyhedron with an exterior hull that rotates to the right and a Schönhardt hull that twists to the left  would 
produce a
 perfectly rigid structure 
 even though the 
individual \q{building pieces} are not.
The same is (probably?) true for exterior hulls that would potentially flex in some other way that does not involve a twist. \\ \\
In the spirit of the discussion from the previous section, 
recall how the motion of untwisting the Schönhardt polyhedron induces a height augmentation $\Delta h$ of the structure.
This implies that any exterior hull giving  rise to a flexible $T$-polyhedron would have to vary in height by the same amount $\Delta h$. Thus: \\\\
$\textbf{EO4}$: A flexible $T$-polyhedron is characterized by the height variation $\Delta h $ of the Schönhardt hull. Any rotational motion of the exterior hull must not only be performed in the same direction as the one of the 
Schönhardt hull, but vary in height by the same amount of $\Delta h$. \\ \\
Being equipped with $\textbf{EO4}$ and the contrapositive of $\textbf{EO3}$, a natural choice of $T$-polyhedra to test would be the ones obtained by replacing the exterior hull by a second Schönhardt polyhedron.
After all, we can be assured that the exterior hull has the same \q{rotational properties} as the Schönhardt hull and calibrating the edge lengths of the equilateral triangles constituting it with respect to the height of the twisted exterior hull 
(such that to 
verify $\textbf{EO4}$), this would supply us with a nice and flexible polyhedron. See Figure \ref{fig:fig}, with
 top and bottom cover removed 
 for a better visualization. 
\begin{figure}[H]
\begin{subfigure}{.5\textwidth}
  \centering
\includegraphics[width=4.5cm,height=3.5cm]{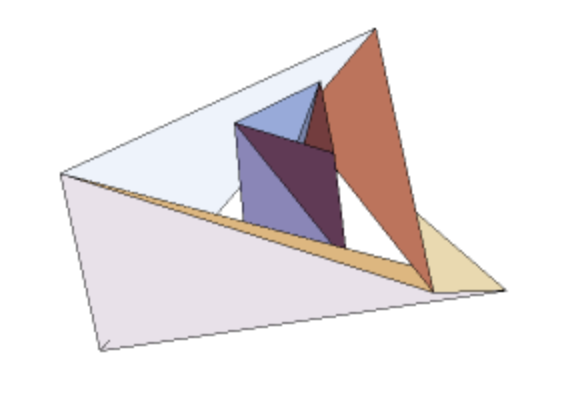}
  %\caption{1a}
  \label{TS3}
\end{subfigure}%
\begin{subfigure}{.5\textwidth}
  \centering
  \includegraphics[width=3.5cm,height=3cm]{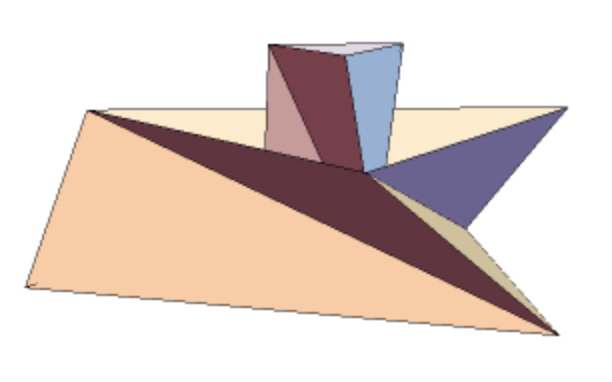}
  %\caption{1b}
  \label{TS2}
\end{subfigure}
\caption{Nested Schönhardt polyhedra}
\label{fig:fig}
\end{figure}

Even though this might seem like we  completed  our task by finding a flexible $T$-polyhedron (and the thought does indeed admit quite a compelling attraction), we are unfortunately anything but done.
The problem lies in the sole fact that our promising candidate is actually not   a $T$-polyhedron! \\ \\
To see why, remember that $T$-polyhedra are by construction assumed to be decomposable and that calculating eigenvalues of $M_T$ requires us to find a triangulation of the polyhedron as a preliminary.
Since there must exist some simplex of any potential triangulation of the preceding polyhedron that would have to share one of its faces with one of the triangular faces of the Schönhardt hull, the remaining
fourth vertex of the simplex would be forced to lie on a vertex of the exterior hull, creating  thereby  forbidden intersections. 
\begin{figure}[H]
\begin{subfigure}{.5\textwidth}
  \centering
  \includegraphics[width=3.5cm,height=3cm]{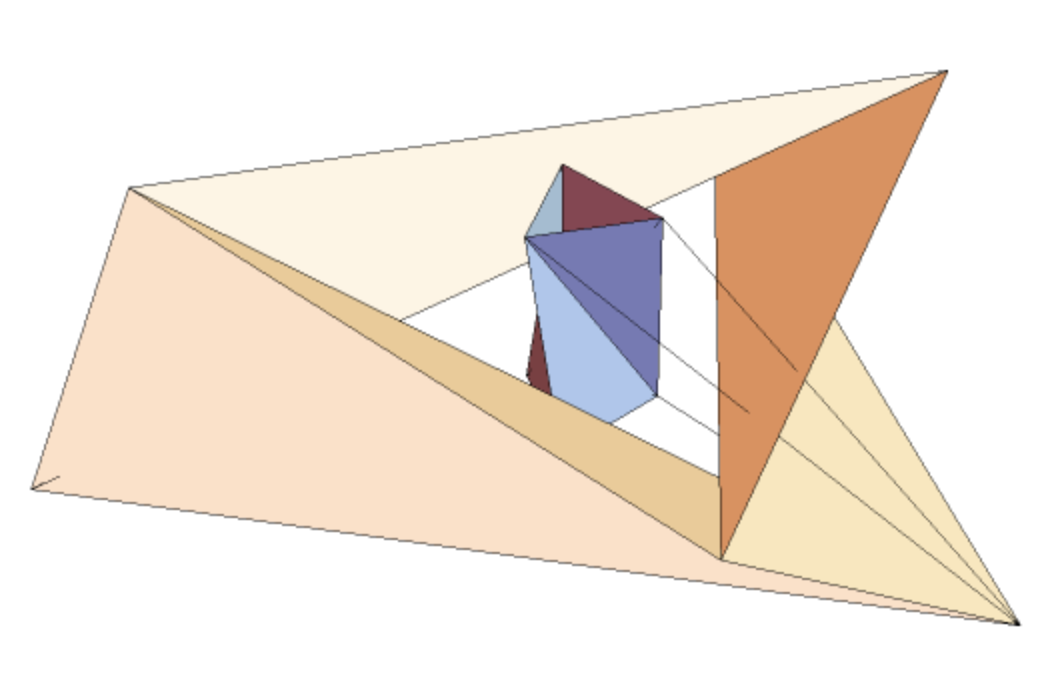}
  %\caption{1a}
  \label{fig:sfig1}
\end{subfigure}%
\begin{subfigure}{.5\textwidth}
  \centering
  \includegraphics[width=4cm,height=3.5cm]{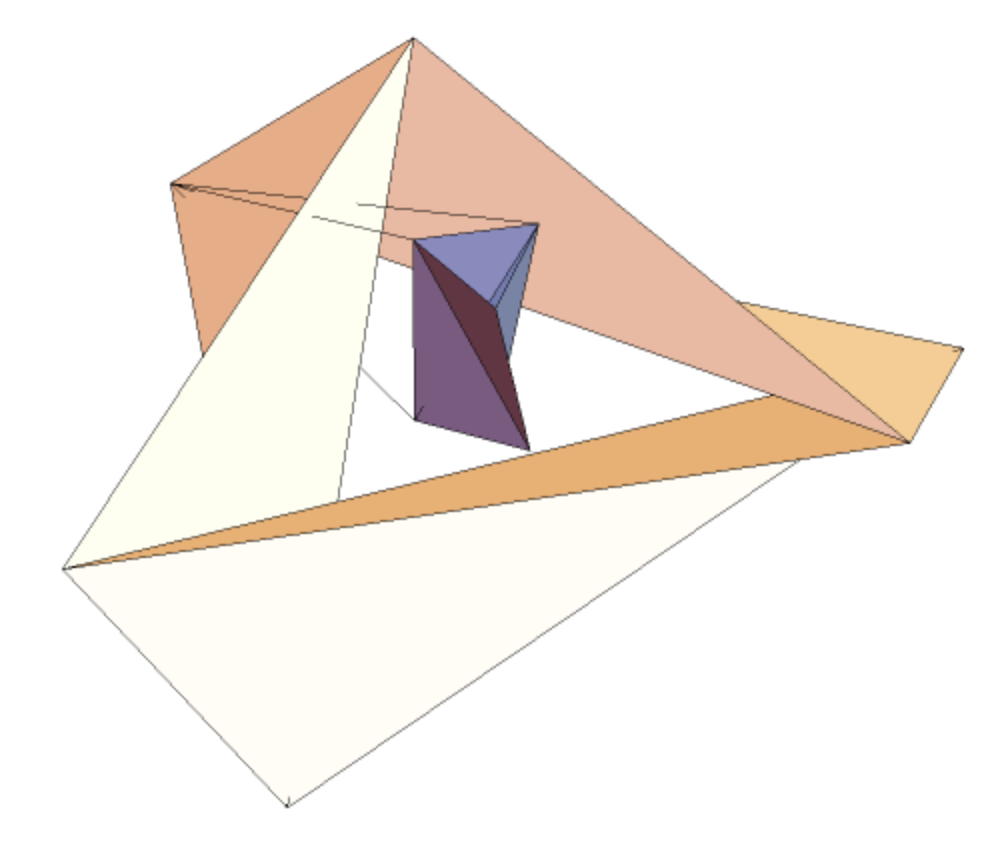}
  %\caption{1b}
  \label{fig:sfig2}
\end{subfigure}
\caption{Resulting non-decomposable polyhedron}
\label{fig:fig4}
\end{figure}
Since the possibly  easiest choice of exterior hull that permits rotations did not provide us with the result we sought, it is time to look at more complicated (and less obvious) examples with the help of a computer.
This purely computational approach will however not be explored in depth here (see Appendix).

\subsection{Seemingly  incompatible assumptions \label{section3.3}}
It proves not to be particularly difficult to find counterexamples to variations of the main
conjecture obtained by dropping one of the assumptions. To see why weak convexity 
is necessary, consider
the polyhedron in Figure \ref{poly}.
Since it is decomposable via a 
certain triangulation $T$, one can
 calculate the eigenvalues of
the 
associated matrix $M_T$.
Of course, such a computation is not needed in this particular case since Cauchy's Rigidity Theorem assures us that due to its convexity, the polyhedron must be rigid.
As a check, it 
can be 
computationally confirmed  
that $M_T$ does indeed admit only positive eigenvalues.

\begin{figure}[H]
\begin{subfigure}{.5\textwidth}
  \centering
  \includegraphics[width=5cm,height=5cm]{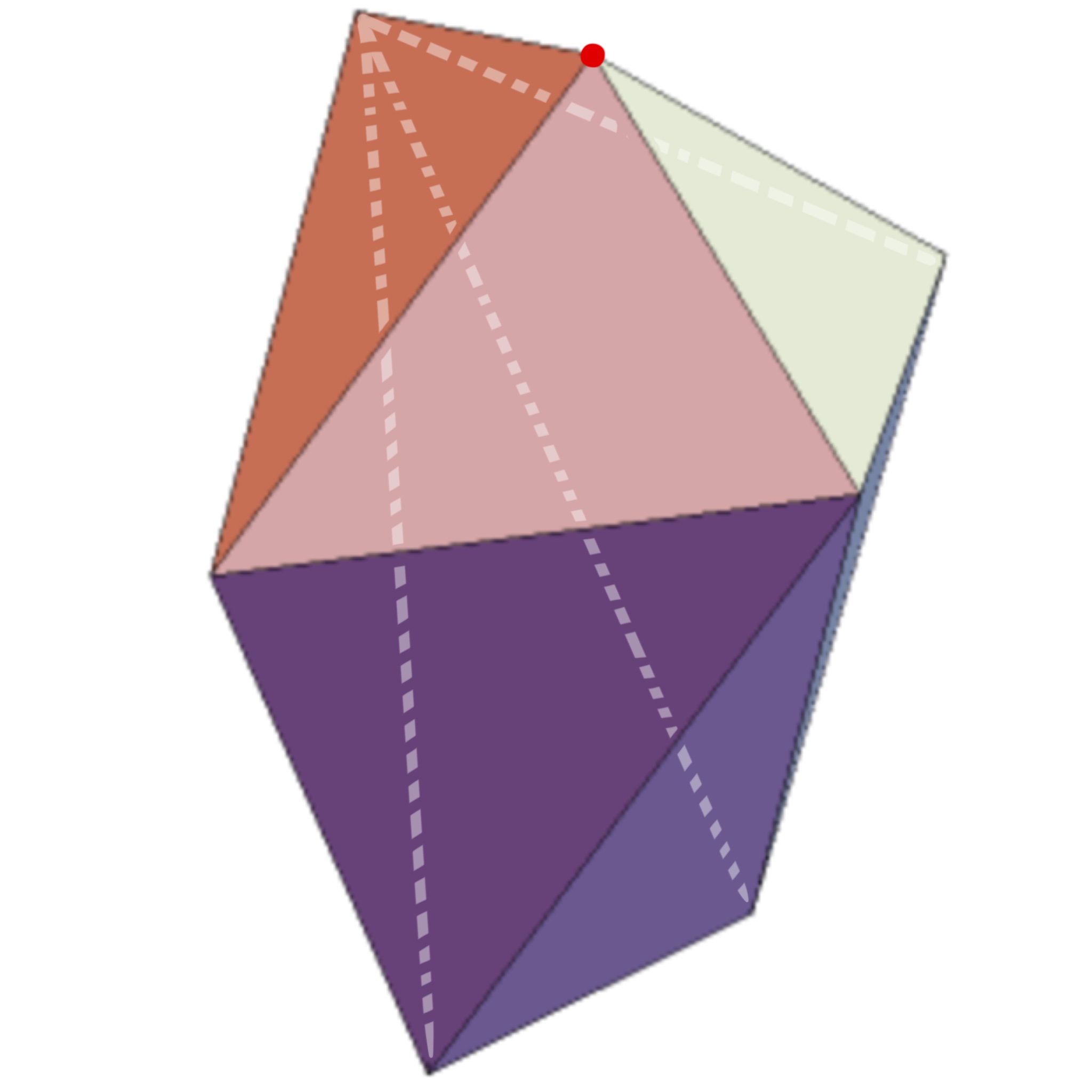}
  \caption{A convex polyhedron}
  \label{poly}
\end{subfigure}%
\begin{subfigure}{.5\textwidth}
  \centering
  \includegraphics[width=4cm,height=5cm]{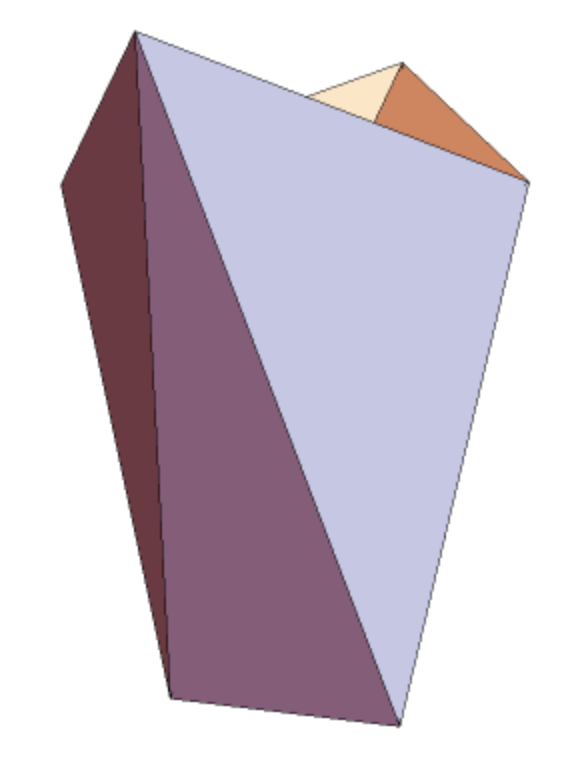}
  \caption{Pushing the red vertex inside}
  \label{poly2}
\end{subfigure}
\begin{subfigure}{.5\textwidth}
  \centering
  \includegraphics[width=4.5cm,height=4.5cm]{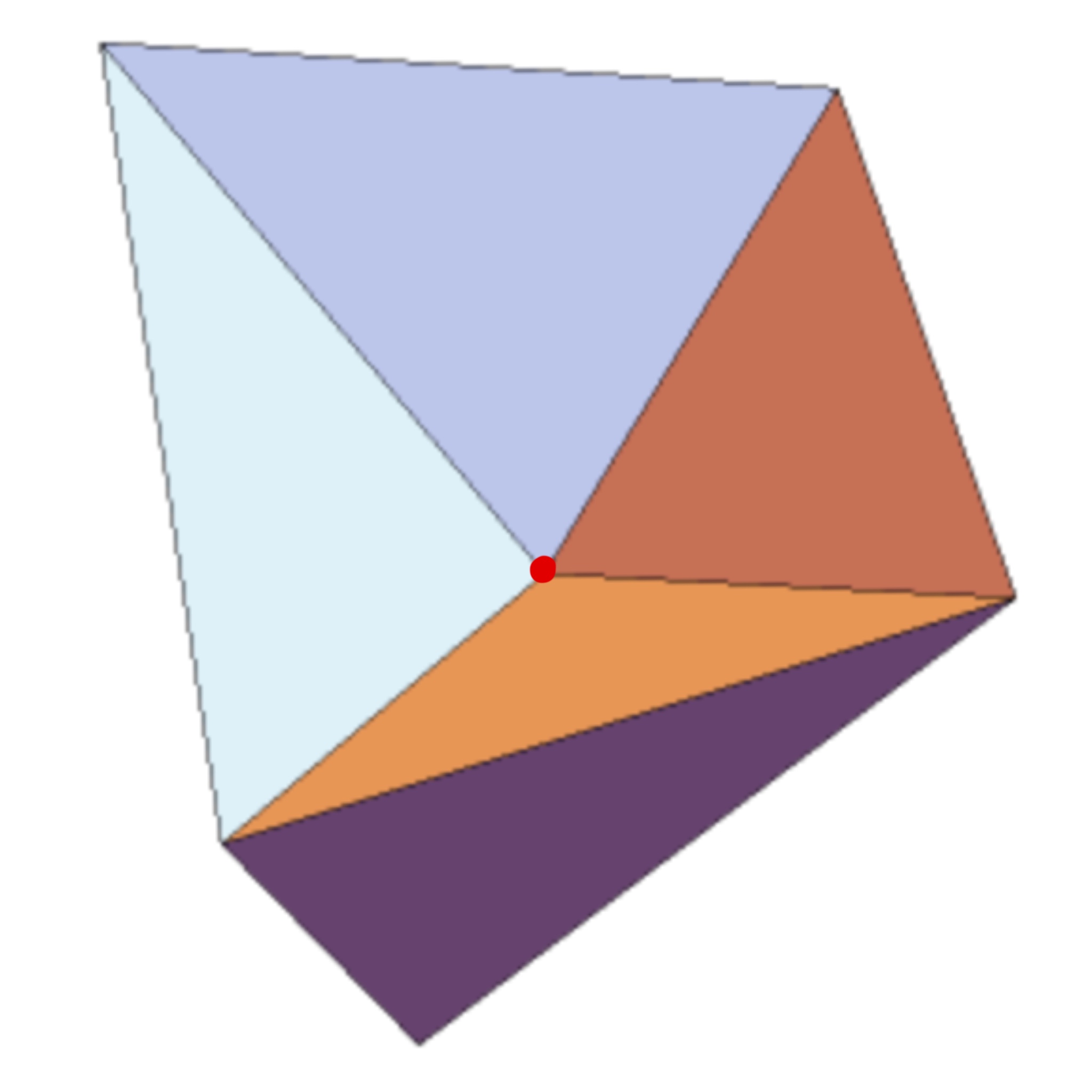}
  \caption{View from above}
  \label{fig:sfig3}
\end{subfigure}
\begin{subfigure}{.5\textwidth}
  \centering
  \includegraphics[width=4.2cm,height=5cm]{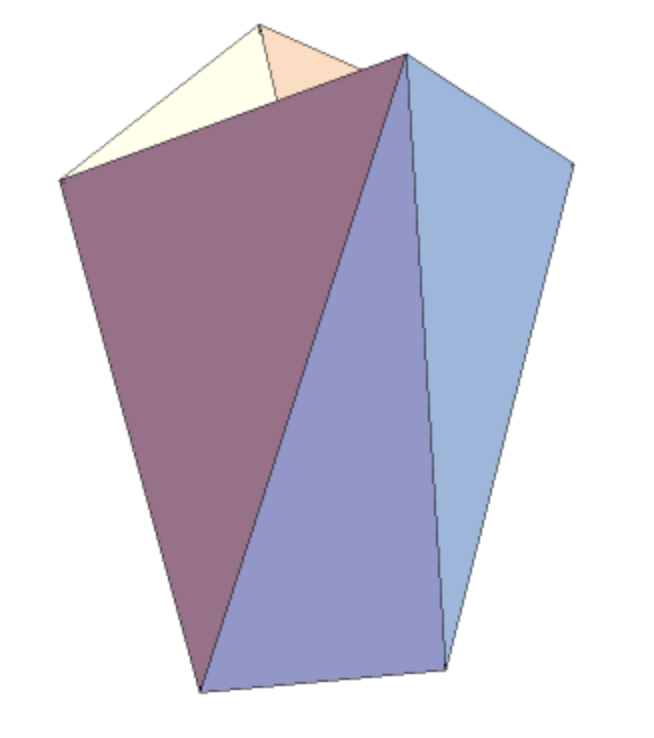}
  \caption{Weak convexity is lost}
  \label{fig:sfig5}
\end{subfigure}
\caption{A non weakly convex example}
\label{fig:fig5}
\end{figure}

Things look quite different when investigating the polyhedron obtained by "pushing" the top vertex of the previous one towards the interior. Even though 
the triangulations of both polyhedra admit the same number of simplices, the first one is (weakly) convex whereas the second one is definitely not.
In the latter case, we find that $M_T$ admits a negative real and zero as eigenvalues, which implies, using Lemma
\ref{L_1}, that the polyhedron is not infinitesimally rigid, hence infinitesimally flexible.
So, weak convexity is not an assumption 
that should be dropped from the main conjecture. 

Coming back to $T$-polyhedra, it is indeed the case that the triangulation issue we encountered before can be resolved, and that by shifting the Schönhardt hull to an appropriate height 
(for a given exterior hull, there are two possible choices
for the height of the Schönhardt hull that would make its $\Delta h$  be identical to the $\Delta h$  of the exterior hull).

\begin{figure}[H]
\centering
\includegraphics[width=6cm,height=5cm]{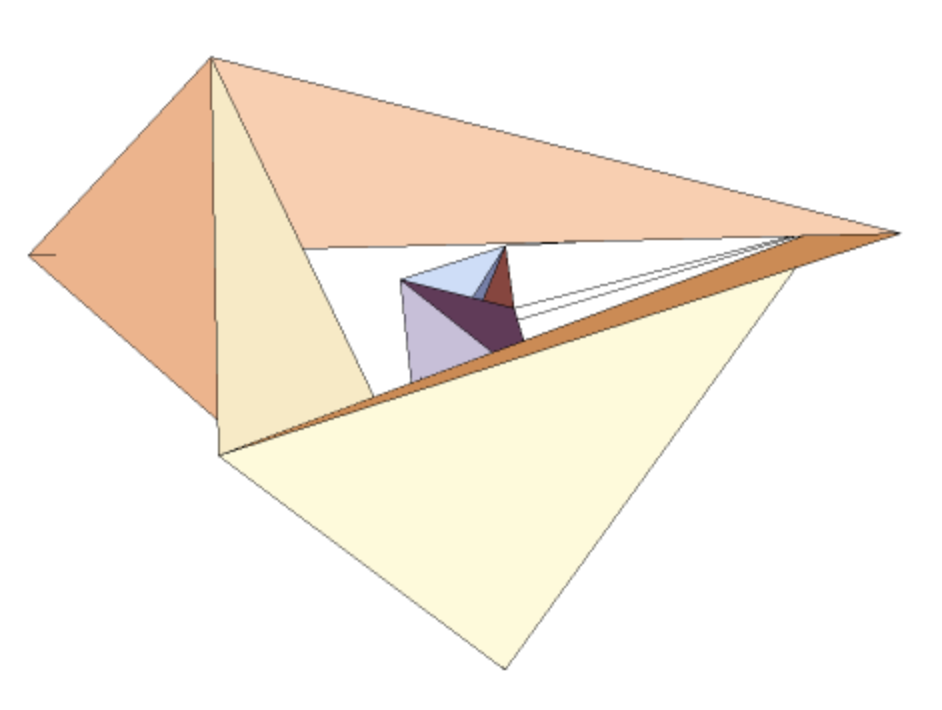}
\caption{Resolving non-decomposability}
\label{ST3}
\end{figure}
Unfortunately, this process makes the polyhedron lose weak convexity and admittedly, it does seem just like flexibility forces us to choose between decomposability or weak convexity, an exclusive or.
Thus, let us make one final (and quite reckless) \textbf{experimental conjecture}: 
\begin{center}
    $\textbf{EC1}$: There is no $T$-polyhedron that twists, 
    that is,  any weakly convex, 
    decomposable polyhedron having 
    the Schönhardt polyhedron 
    as its complement is 
    infinitesimally rigid. 
\end{center}

\subsection{Conclusions and outlook}
As 
the reader may have remarked, the conducted \textit{Mathematica} - calculations (although very interesting) did not provide us with any additional information. 
Taking into account that the polyhedra  studied in this 
paper were rather small (none of them
exceeded $18$ vertices) and  predictable, most of the computations
 could have been replaced by a straightforward application of a certain theorem and we were not  able to develop the powerful formalism involving $M_T$ to its full potential.
 This could of course be achieved by writing a
 better program that allows to test millions of
 more complicated examples. 
Maybe such 
 an expanded and more sophisticated 
 search would lead towards the almighty one that might disprove the main conjecture. \\ \\
 We hope that this note will be considered helpful 
 then.

\appendix

\section{Finding the eigenvalues of $M_T$ with \textit{Mathematica}}
In order to demonstrate how \textit{Mathematica} extracts 
the eigenvalues of  $M_T$, it 
is best to study a simple 
example.
The first step consists in defining the combinatorics of the polyhedron. 
For instance, an octahedron 
can be encoded as:
\begin{verbatim}
    Q =
Polyhedron[{A1 = {1,0,0}, A2 = {-1,0,0}, A3 = {0,1,0},
A4 = {0,-1,0}, A5 = {0,0,1}, A6 = {0,0,-1}},
{{1,3,5},{2,5,4},{4,1,5},{6,1,3},
{3,6,2},{2,6,4},{4,6,1},{3,2,5}}]
\end{verbatim}

Since this octahedron can be triangulated by means of four equivalent simplices only  one  dihedral angle needs
to be determined, the remaining ones being identical in this case. 
Thus, we choose the vertices A2, A4, A5, and A6, and compute the 
resulting Cayley-Menger determinant:
\begin{verbatim}
    CM1 =
Det[{{0,EuclideanDistance[A2,A5]^2, EuclideanDistance[A5,A4]^2, 
        EuclideanDistance[A5,A6]^2,1},
    {EuclideanDistance[A2,A5]^2, 0, EuclideanDistance[A2,A4]^2, 
        EuclideanDistance[A2,A6]^2, 1},
    {EuclideanDistance[A5,A4]^2, EuclideanDistance[A2,A4]^2, 
        0, EuclideanDistance[A4,A6]^2, 1},
    {EuclideanDistance[A5,A6]^2, EuclideanDistance[A2,A6]^2, 
        EuclideanDistance[A4,A6]^2, 0,1},
    {1,1,1,1,0}}]
\end{verbatim}
Now, in order to use Equation \eqref{eq1}, one 
starts by computing the minor of the matrix CM1 associated to the edge length of the simplex that corresponds to the interior edge length of the triangulation. 
Concretely, this is the edge between vertices A5 and A6. 
Then,
\begin{verbatim}
    De2 =
Det[{{0, EuclideanDistance[A5,A4]^2, EuclideanDistance[A5,A6]^2,1}, 
    {EuclideanDistance[A2,A5]^2, EuclideanDistance[A2,A4]^2, 
        EuclideanDistance[A2,A6]^2, 1}, 
    {EuclideanDistance[A5,A4]^2, 0, EuclideanDistance[A4,A6]^2, 1}, 
    {EuclideanDistance[A5,A6]^2, EuclideanDistance[A4,A6]^2, 0,1}}]
\end{verbatim}
in combination with Equation \eqref{eq1} and 
\begin{verbatim}
    ArcCos[De2/(Sqrt[2 * EuclideanDistance[A5,A6]^2 * CM + (De2)^2])]
\end{verbatim}
yields 
the expected dihedral angle of $\alpha_0 := \pi/2$. 
This is the initial dihedral angle.

Of course, the total angle around the edge $A5,A6$ is the sum of the individual dihedral angles.
Since they are all equal, we obtain $4 \cdot \pi/2 = 2\pi$, which  is not much of a 
surprise.
In particular, since we haven't  deformed the metric inside of the polyhedron yet, everything is nice and Euclidean and dihedral angles naturally  sum up to $2\pi$. A useful fact to remember. 

Recalling that the entries of $M_T$ are the derivatives of the total angle around each edge  with respect to the 
corresponding edge length, we are faced with a
complication since
 differentiating the expression obtained by Equation 
\eqref{eq1} slows 
the calculations down considerably.
Thus as a first step and in order to facilitate computations, we'll make the following approximation:
$$  \pdv{\omega_i}{l_j} \approx \frac{\omega_{if} - \omega_{i0}}{l_{jf} - l_{j0}}. $$ 
In other words, the interior edge lengths of the triangulation receive a tiny length variation, say $\epsilon$ (of course, $\epsilon >0 $), which (eventually) induces a change in the total angle around the modified edge and 
all the remaining  interior edges as well.
As remarked earlier, prior to the change of interior edge lengths the geometry is perfectly Euclidean. Hence $\omega_{i0} = 2\pi$, even without calculating any CM determinants.
In that vein, we can reformulate the approximated derivative as
$$\frac{\omega_{if} - \omega_{i0}}{l_{jf} - l_{j0}}  = \frac{\omega_{if} - 2 \pi}{ l_{j0} + \epsilon - l_{j0}} =  \frac{\omega_{if} - 2 \pi}{ \epsilon }, $$
which gains evermore on accuracy the smaller the $\epsilon$ is. 
The only quantity that is left to be determined is now $\omega_{if}$.
Coming back to our example, we can pick $\epsilon = 0.00000001$ and repeat the process from before while taking care that the edge length between vertices $A5$ and $A6$ has now gained on length 
(all the other edge lengths are kept constant). The Cayley-Menger determinant in this case is 
\begin{verbatim}
    CM2 =
Det[{{0, EuclideanDistance[A2,A5]^2, EuclideanDistance[A5,A4]^2, 
(EuclideanDistance[A5,A6] + 0.00000001)^2, 1},
{EuclideanDistance[A2,A5]^2, 0, EuclideanDistance[A2, A4]^2, 
EuclideanDistance[A2,A6]^2, 1}, 
{EuclideanDistance[A5,A4]^2, EuclideanDistance[A2, A4]^2, 
0, EuclideanDistance[A4,A6]^2, 1},
{(EuclideanDistance[A5,A6] + 0.00000001)^2, 
EuclideanDistance[A2,A6]^2, EuclideanDistance[A4, A6]^2, 0, 1},
{1,1,1,1,0}}]
\end{verbatim}
and with the appropriate minor
\begin{verbatim}
    De3 =
Det[{{0, EuclideanDistance[A5,A4]^2, 
        (EuclideanDistance[A5,A6] + 0.00000001)^2,1}, 
    {EuclideanDistance[A2,A5]^2, EuclideanDistance[A2,A4]^2, 
        EuclideanDistance[A2,A6]^2, 1},
    {EuclideanDistance[A5,A4]^2, 0, EuclideanDistance[A4,A6]^2, 1}, 
    {(EuclideanDistance[A5,A6] + 0.00000001)^2, EuclideanDistance[A4,A6]^2,0,1}}]
\end{verbatim}
one can use once again 
Equation \eqref{eq1} to obtain 
\begin{verbatim}
    ArcCos[De3/(Sqrt[2 * EuclideanDistance[A5, A6]^2 * CM2 + (De3)2])] = 1.5708.
\end{verbatim}
With this, the matrix $M_T$ becomes
$$M_T = \left( \frac{\omega_{if} - 2 \pi}{ \epsilon }  \right) = \left( \frac{\omega_{if} - 2 \pi}{ 0.00000001 }  \right) = \left( \frac{4 \cdot 1.5708  - 2 \pi}{ 0.00000001 }  \right) = \left( 1469.28 \right). $$
It being a $1\times1$ matrix in this example, the eigenvalues are easily read off and we can conclude, with the aid of Lemma \ref{L_1}, that the polyhedron is indeed infinitesimally rigid. 

Since this particular polyhedron is convex, it is true that we could have just applied Cauchy's theorem to conclude the same, and that without having to do any calculations.
However, the purpose of this example was to give a simple outline of the method, nothing more and nothing less. \\ \\
\textbf{Acknowledgments}\\
This paper is the result of an independent research project conducted under the Experimental Mathematics Lab during the winter semester of $2021$ at the University of Luxembourg  \url{https://math.uni.lu/eml/}. \\
I want to express my gratitude towards my supervisor Jean-Marc Schlenker for guiding me
through the realm of polyhedral geometry and Nina Morishige for many thought-provoking discussions.

\end{document}